\numberwithin{equation}{section} 
\newtheorem{theorem}{Theorem}[section]
\newtheorem{lemma}[theorem]{Lemma}
\newtheorem{corollary}[theorem]{Corollary}
\newtheorem{assumption}[theorem]{Assumption}
\newtheorem{remark}[theorem]{Remark}
\numberwithin{equation}{section}
\providecommand{\supp}{\support}
\providecommand{\dx}{\,\mathrm{d}x}
\providecommand{\dz}{\,\mathrm{d}z}
\providecommand{\dt}{\,\mathrm{d}t}
\newcounter{formel}
\begin{document}
%  \thispagestyle{empty}
%\begin{titlepage}
%	\ThisTileWallPaper{\paperwidth}{\paperheight}{Hintergundbild.png}
%	\center{	
%		\huge{\textbf{The local boundedness of gradients of weak solutions to elliptic and parabolic $\phi$-Laplacian systems}}\\
%		\vspace{0.5cm}
%		\normalsize{Toni Scharle}\\
%		\vspace{1cm}
%		\Large{A thesis submitted for the degree of\\
%		Master of Science}\\
%		\vspace{0.5cm}
%		\normalsize{supervised by Prof. Dr. Lars Diening}\\
%		\vspace{0.7cm}
%		\large{May 2015}\\
%		}
%		\vspace{\fill}
%\end{titlepage}

%
\title{Regularity for parabolic systems of Uhlenbeck type with
Orlicz growth}

\author[L.~Diening]{Lars Diening}
\address{Bielefeld University, Faculty of Mathematics, Germany}
\email{lars.diening@uni-bielefeld.de}

\author[T.~Scharle]{Toni Scharle} 
\address{Mathematical Institute, OX2 6GG,
University of Oxford}
\email{toni.scharle@maths.ox.ac.uk}

\author[S.~Schwarzacher]{Sebastian Schwarzacher}%{Sebastian Schwarzacher}
\address{Institute of Applied Mathematics, Bonn University, Germany \&
Department of Mathematical Analysis, Charles University, Prague, Czech Republic}
\email{schwarz@karlin.mff.cuni.cz}
\thanks{S.~Schwarzacher thanks the project MORE LL1202 financed by the Ministry of Education, Youth and
Sports, Czech Republic} 
\maketitle

\begin{abstract}
  We study the local regularity of $p$-caloric functions or more
  generally of $\phi$-caloric functions. In particular, we study local
  solutions of non-linear parabolic systems with homogeneous right
  hand side, where the leading terms has Uhlenbeck structure of Orlicz
  type. This paper closes the gap of~\cite{Lie06} where Liebermann
  proved that if the gradient of a solution is bounded, it is H\"older
  continuous.
  
  The crucial step is a novel local estimates for the gradient of the
  solutions, which generalize and improve the pioneering estimates of
  DiBenedetto and Friedman~\cite{DiBFri85,DiB93} for the $p$-Laplace
  heat equation.
\end{abstract}
	
\keywords{Non-linear PDE’s, Degenerate parabolic systems, Gradient
  estimates, De Giorgi technique.}

\subjclass{MSC: 35K40, 35B45, 35K65, 35B65.}

\section{Introduction}

In this paper we study the local regularity of $p$-caloric functions
and $\phi$-caloric functions. The \emph{$p$-caloric} functions are
local, weak solutions of the $p$-Laplace heat equation
\begin{align}
  \label{eq:pcaloric}
  \partial_tu-\divergence\big(\abs{\nabla u}^{p-2}\nabla u\big)=0
\end{align}
with $1<p<\infty$. The \emph{$\phi$-caloric functions} are local, weak
solutions of the $\phi$-Laplace heat equation
\begin{align}
  \label{eq:phicaloric}
  \partial_t u - \divergence \Big(\frac{\phi'(\abs{\nabla
        u})}{\abs{\nabla u}} \nabla u \Big)=0,
\end{align}
where $\phi:[0,\infty)\to[0,\infty)$ is an Orlicz function satisfying
the natural condition~$\phi''(t)\,t \eqsim \phi'(t)$, see
Assumption~\ref{ass:main1} for more details.  The case $\phi(t) =
\frac 1p t^p$ corresponds to $p$-caloric functions, so $p$-caloric
functions are a special case of~$\phi$-caloric functions. All
solutions in this paper may be scalar or vectorial, i.e. we study both
equations and systems.

More explicitly, let $J$ be a time interval and $\Omega$ a domain in
$\setR^n$. Then we will study local weak solutions~$u$
of~\eqref{eq:phicaloric}. In particular, we study functions
$u:J\times\Omega\to\setR^N$ with $u \in L^\infty(J,L^2(\Omega))$ and
$\phi(\abs{\nabla u}) \in L^1(J, L^1(\Omega))$ such that
\begin{align}
  \label{eq:weak}
  \int \frac{\phi'(\abs{\nabla u})}{\abs{\nabla u}} \nabla {u}\cdot
  \nabla {\zeta} \dz=\int{u}\cdot \partial_t {\zeta} \dz,
\end{align}
for all $\zeta\in C^\infty_0(J\times \Omega; \setR^N)$.  Our main
result is the local boundedness of the gradients~$\nabla u$, see
Theorem~\ref{thm:main}.

Let us begin with the case of $p$-caloric functions.  It is known that
if~$\nabla u$ is locally in~$L^2$, then $\nabla u$ is already H\"older
continuous. This was proven in the celebrated works of DiBenedetto and
Friedman~\cite{DiBFri84,DiBFri85}. 
%for the case $\frac{2n}{n+2}<p<\infty$. 
In the first step of the proof the authors
show the boundedness of~$\nabla u$. Unfortunately, it was necessary to
have separate proofs for the sub-linear case~$p\leq 2$ and the
super-linear case~$p\geq 2$. We will introduce in this paper a new
approach that allows to handle both cases at once. 

The origins to prove $L^\infty$ bounds of quasi-linear or non-linear
parabolic solutions was achieved by Nash~\cite{Nas58} and
Moser~\cite{Mos64} by the celebrated DiGiorgi-Nash-Moser
technique. For degenerate non-linear elliptic equations this technique
was adapted by Ural'ceva~\cite{Ura68} and for non-linear elliptic
systems by Uhlenbeck~\cite{Uhl77}. Both authors proved H\"older
continuity of the gradients of $p$-harmonic functions, i.e. solutions
of $\divergence\big(\abs{\nabla u}^{p-2}\nabla u\big)=0$.  Up to this
day this is the best regularity result for homogeneous solutions to
the p-Laplace equation which is known for space dimensions $n\geq 3$.

Later it was observed by various authors, that the growth restrictions
can be generalized to Orlicz growth.  For the elliptic homogeneous
theory we refer to~\cite{Lie91,MarPap06,DieSV09,BreStrVer11} and also
the book~\cite{Bil03}. Under natural assumptions on the Orlicz growth
it is shown in these references, that the gradients of local weak
solutions are H\"older continuous.  As a consequence the so called
non-linear \Calderon{}-Zygmund theory was applicable and many results
for elliptic systems with Orlicz growth and inhomogeneous right hand
side were proven, see for
example~\cite{DieKapSch11,DieStrVer12,Bar15}.

Let us consider now the case of $\phi$-caloric functions. Lieberman
showed, that $\phi$-caloric functions with bounded gradients already
have H\"older continuous gradients. However, the step proving local
boundedness of the gradients was still missing.  
For equations this gap was closed independently by Baroni and
Lindfors~\cite[Theorem~1.2]{BarLin17}. In this work we prove the
boundedness of the gradients in to the general vectorial case.

The crucial step lies in a novel $L^\infty$-gradient estimate that
quantitatively improves the known ones even for $p$-caloric functions,
in particular those from the seminal work of DiBenedetto and
Friedman~\cite{DiBFri84,DiBFri85}. In their work the gradients are
pointwisely estimated in terms of the maximum of a constant and a
suitable mean average of the gradients. Therefore, if the average of
the gradients is small, the estimates of DiBenedetto and Friedman only
imply that~$\abs{\nabla u}\leq c$. In contrast, in our estimate small
averages imply small~$L^\infty$ bounds. See the discussions after
Remark~\ref{rem:L2} for more details.
% Let us point out one major improvement in our
% $L^\infty$-gradient estimates compared to all literature available and
% which is a novelty even for $p$-caloric functions.  In DiBenedetto and
% Friedman~\cite{DiBFri84,DiBFri85} the gradients are pointwisely
% estimated in terms of the maximum of a constant and a suitable mean
% average of the gradients. Therefore, if the average of the gradients
% is small, one still only get~$\abs{\nabla u}\leq c$. In our estimates
% we are able to get rid of this ``maximum of a constant and \dots'', so
% if the gradient averages are small, we also get pointwise small
% gradients. See the discussions after Remark~\ref{rem:L2} for more
% details.

Finally, our proof is a lot
shorter than the original one by DiBenedetto and Friedman and allows
to treat the entire range $p\in (\max\set{1,2-\frac 4 n},\infty)$ in
one step.

\section{Notations and main results}
\label{sec:main}

To simplify the notation the letter~$c$ will denote a positive
constant, which may vary throughout the paper but will not depend on
the important quantities. We write $f \lesssim g$ for $f \leq c\, g$
and $f \gtrsim g$ for $f \geq c \, g$. Moreover, we write~$f \eqsim g$
if $f \lesssim g \lesssim f$.  We say that a
function~$f\,:\,(0,\infty) \to (0,\infty)$ is \emph{almost increasing}
if $f(t_2) \leq c\,f(t_1)$ for all $t_2 \geq t_1$.
For a ball~$B$, resp. cylinder~$Q$, and $\lambda >0$ we define
$\lambda B$, resp. $\lambda Q$, as the ball/cylinder with the same
center but the radius scaled by the factor~$\lambda$.

We begin, by introducing the assumptions on our Orlicz
function~$\phi$. The assumptions are quite standard and motivated by
the elliptic theory, see for example~\cite{DieSV09}.
\begin{assumption}
  \label{ass:main1}
  Let $\phi\,:\,[0,\infty)\to [0,\infty)$ be convex function with
  $\phi\in C^2((0,\infty))\cap C^1([0,\infty))$, $\phi(0)=0$,
  $\phi'(0)=0$ and
  $\lim_{t\rightarrow\infty}\phi(t)=\infty$. Moreover, we assume, that
  \begin{align}
    \label{eq:assmain1}
    \phi'(t)\eqsim \phi''(t) t.
  \end{align}
  The constants hidden in "$\eqsim$" will be referred to as the
  characteristics of $\phi$.
\end{assumption}
One consequence of Assumption~\ref{ass:main1} is that $\phi$ and its
conjugate function~$\phi^*$, given by $\phi^*(s) = \sup_{t \geq 0} (st
- \phi(t))$, automatically satisfies the~$\Delta_2$ condition,
i.e. $\phi(2t) \leq c\, \phi(t)$ and $\phi^*(2t) \leq c\, \phi^*(t)$
for all~$t \geq 0$, see for example (2.2) of~\cite{BelDieKre11}.  This
excludes linear and exponential growth. The functions~$\phi(t) = \frac
1p t^p$ with $1<p<\infty$ satisfy Assumption~\ref{ass:main1}.

Our main theorem is the following
\begin{theorem}
  \label{thm:main}
  Let $\phi$ satisfy Assumption~\ref{ass:main1} and let
  $\rho(t):=\big( \phi(t)\big)^{\frac n2}\,t^{2-n}$ be almost
  increasing. Further, let $u$ be a local weak solution
  to~\eqref{eq:phicaloric} on $J\times \Omega$ with $\nabla u \in
  L^2_{\text{loc}}(J,L^2_{\text{loc}}(\Omega))$. Then for any cylinder
  $Q:=(t-\alpha r^2,t) \times B_R(x)$ with $2Q\subset J\times \Omega$
  we have
  \begin{align}
    \label{eq:main}
    \min\left\{ \sup_Q \frac {\rho(\abs{\nabla
          u})}{\alpha^{\frac{2-n}{2}}}, \sup_Q \frac{\abs{\nabla
          u}^2}{\alpha}\right\} \leq c\dashint_{2Q}\frac{\abs{\nabla
        u}^2}{\alpha}+\phi(\abs{\nabla u})\dz.
  \end{align}
  The constant only depends on the characteristics of $\phi$.
\end{theorem}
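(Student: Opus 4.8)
The plan is to run a De Giorgi / Moser iteration not on $u$ itself but on a suitable quantity measuring the size of $\nabla u$, exploiting the Uhlenbeck structure of the system. First I would establish the required a priori regularity: by Lemma~\ref{lem:W21} the solution has second order spatial weak derivatives and $\partial_t u$ enjoys enough integrability so that the formal computations below are legitimate (a mollification/Steklov-averaging argument handles the time derivative rigorously). Testing the equation, differentiated in the spatial directions, with $\nabla u$ cut off appropriately, and using $\phi'(t)\eqsim\phi''(t)t$ to compare the full Hessian term $D^2\big(\tfrac{\phi'(|\nabla u|)}{|\nabla u|}\nabla u\big)$ with $\phi''(|\nabla u|)|\nabla^2 u|^2$, one derives the fundamental Caccioppoli inequality for the truncated energy: on a sub-cylinder, for levels $k$,
\begin{align*}
  \sup_t \int (|\nabla u|^2-k^2)_+\,\eta^2\,\dx
  + \int\!\!\int \phi''(|\nabla u|)\,|\nabla^2 u|^2\,\mathds{1}_{\{|\nabla u|>k\}}\,\eta^2\,\dz
  \lesssim \int\!\!\int (|\nabla u|^2-k^2)_+\,(|\nabla\eta|^2+|\partial_t\eta^2|)\,\dz,
\end{align*}
where $\eta$ is a space-time cutoff. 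The point of phrasing everything through $|\nabla u|^2$ is that $w:=|\nabla u|$ is a subsolution of a linear (degenerate) parabolic equation with coefficients frozen at $\nabla u$, so the sub-linear and super-linear cases are treated uniformly — this is the promised "one step" improvement over DiBenedetto–Friedman.

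Next I would set up the geometry. Given $Q_R=(t_0-\alpha R^2,t_0)\times B_R(x)$ with $Q_{2R}\Subset J\times\Omega$, fix the target level
\[
  \mu^2 := c\,\dashint_{Q_{2R}}\frac{|\nabla u|^2}{\alpha}+\phi(|\nabla u|)\,\dz,
\]
and argue by contradiction: assume both terms on the left of \eqref{eq:main} exceed $\mu^2$ (suitably normalized), so in particular $|\nabla u|$ is large on a set of positive measure at the top of $Q_R$. Using the structural hypothesis that $\rho(t)=\phi(t)^{n/2}t^{2-n}$ is almost increasing, I would perform the standard intrinsic-scaling rebalancing: the two competing quantities $\rho(|\nabla u|)\alpha^{(n-2)/2}$ and $|\nabla u|^2/\alpha$ are exactly the ones that are scale-invariant under the parabolic scaling adapted to the $\phi$-Laplacian, and the almost-monotonicity of $\rho$ is precisely what guarantees that on the relevant level set one of the two controls the natural energy $\dashint \tfrac{|\nabla u|^2}{\alpha}+\phi(|\nabla u|)$ from below. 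This lets one absorb the Orlicz nonlinearity into a clean power-type nonlinearity on the level sets.

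Then comes the iteration itself. Feed the Caccioppoli inequality into the parabolic Sobolev embedding $L^\infty_tL^2_x\cap L^2_tW^{1,2}_x\hookrightarrow L^{2(n+2)/n}_{t,x}$ applied to $(w^2-k^2)_+^{1/2}\eta$ (with the degeneracy of $\phi''$ compensated by the intrinsic scaling chosen above), obtaining the reverse-Hölder gain on a sequence of shrinking cylinders $Q_j\downarrow Q_R$ with levels $k_j\uparrow$. The fast-geometric-convergence lemma of De Giorgi type then forces $\sup_{Q_R}w^2\le$ the chosen level, contradicting the assumption and proving \eqref{eq:main}; tracking constants through shows dependence only on $n$, the $\Delta_2$-constants, and the characteristics of $\phi$. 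To remove the "$\max$ with a constant" and get the genuinely homogeneous estimate advertised in the introduction, I would keep the normalization floating rather than fixing an additive $1$, i.e. run the iteration with the level $k$ proportional to the right-hand side of \eqref{eq:main} itself, which is legitimate precisely because the Caccioppoli estimate is homogeneous in $k$.

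The main obstacle I expect is the degeneracy/singularity of $\phi''$: on the level set $\{|\nabla u|>k\}$ one needs a genuine coercivity lower bound for $\phi''(|\nabla u|)$ in order to convert the Hessian term into a usable $W^{1,2}$-norm of $(w^2-k^2)_+^{1/2}$, and this bound depends on $k$ through $\phi$. Handling this is exactly what the intrinsic-scaling choice of $\alpha$ (and the almost-monotonicity of $\rho$) is designed for — choosing the cylinder's time-scale so that $\phi''$ is comparable to a constant on the relevant range of $|\nabla u|$ — but making this self-consistent with the iteration (the level $k$ and the scale $\alpha$ are coupled) is the delicate bookkeeping. A secondary technical point, already flagged in Remark~\ref{rem:W21}, is justifying the differentiated equation in the super-quadratic case; I would handle it by a difference-quotient argument combined with the second-order estimate of Lemma~\ref{lem:W21} and pass to the limit using the $\Delta_2$-condition on $\phi$ and $\phi^*$.
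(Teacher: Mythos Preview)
Your overall De Giorgi scheme is the right one, but two concrete steps would not go through as written.

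First, the Caccioppoli inequality you state is not the one the system actually produces. When the spatial gradient hits the cutoff, the term you pick up is $\phi(|\nabla u|)\,\|\nabla\eta\|_\infty^2\,\chi_{\{|\nabla u|>k\}}$, not $(|\nabla u|^2-k^2)_+\,|\nabla\eta|^2$; see Corollary~\ref{cor:energycor}. The $\phi$-term (coming from the elliptic part) and the $v^2$-term (coming from the time derivative) cannot be merged into a single iterated quantity---this is precisely the scaling deficit. The paper therefore iterates on the \emph{sum} $W_k=Y_k+Z_k$, with $Y_k$ tracking $\phi(v)$ and $Z_k$ tracking $v^2/\alpha$ on the super-level set, and the almost-monotonicity of $\rho$ enters only at the very last step, to close the recursion
\[
W_{k+1}\lesssim 2^{ck}\,W_k\Big(W_k\big/\min\{\rho(\gamma_\infty)\alpha^{(n-2)/2},\,\gamma_\infty^2/\alpha\}\Big)^{2/n}.
\]

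Second, you plan to apply the parabolic Sobolev embedding to $(w^2-k^2)_+^{1/2}$, but the energy only controls $\phi''(v)\,|\nabla v|^2\eqsim|\nabla V(\nabla u)|^2$ on the level set, not $|\nabla v|^2$ itself. Your proposed fix---choose the intrinsic time-scale so that $\phi''$ becomes comparable to a constant on the relevant range---is exactly the DiBenedetto--Friedman mechanism the paper is designed to avoid; it forces the sub-/super-linear case split and, as you yourself anticipate, couples the level $k$ to the scale $\alpha$ in a way that ultimately yields only the $\max$-with-a-constant estimate rather than~\eqref{eq:main}. The paper's key move is instead to truncate in the $V$-variable: one applies (purely spatial) Sobolev to $G(v):=\big(\sqrt{\phi'(v)v}-\sqrt{\phi'(\gamma)\gamma}\,\big)_+$, whose gradient is already controlled by the natural energy with no lower bound on $\phi''$ required. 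This is what makes the argument uniform across all growth rates and delivers the genuinely homogeneous estimate. The contradiction framing is also unnecessary: one simply chooses $\gamma_\infty$ so that $W_0\eqsim\min\{\rho(\gamma_\infty)\alpha^{(n-2)/2},\,\gamma_\infty^2/\alpha\}$ and applies Lemma~\ref{lem:fastgeometricconcergence} directly.
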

The proof of this theorem can be found on~\pageref{lem:proofmain}.

The assumption in Theorem~\ref{thm:main} that~$\rho$ is almost
increasing reflects the usual restriction of the exponents near~$1$
for the parabolic p-Laplace. These restrictions arise due to the
different scaling of the time derivative and the elliptic part.  This
effect is sometimes also called \emph{scaling deficit}.  It would be
desirable in~\eqref{eq:main} to have $\rho(\abs{\nabla u})
\alpha^{\frac{n-2}{2}}$ replaced by~$\phi(\abs{\nabla u})$. However,
the scaling deficit prevents this. The only case with no scaling
deficit, is $\phi(t)=\frac 12 t^2$, which corresponds to the standard
heat equation. Only then, we have $\rho(t) \eqsim \phi(t)$. See
Remark~\ref{rem:pbound} for an explanation, how the restriction $p
\geq 2 - \frac{4}{n}$ arises in the proof.

Examples that satisfy the assumptions of Theorem~\ref{thm:main}, are
$\phi(t)=\frac 1p t^p$, for $p\in
(\max\set{1,2-\frac{4}{n}},\infty)$. In this
case~\eqref{eq:phicaloric} becomes the p-Laplace heat equation. Also
$\phi(t)=\max\set{t^p,t^q}$ or $\phi(t)=\min\set{t^p,t^q}$ with
$\max\set{1,2-\frac{4}{n}}<p\leq q<\infty$ satisfies the assumptions
of Theorem~\ref{thm:main}.

Since the case of $p$-caloric functions is of special interest, let us
phrase our main result for this special situation.
\begin{corollary}
  \label{cor:pcor}
  Let $p>2-\frac 4 n$ and let $u$ be a local weak solution
  to~\eqref{eq:pcaloric} on $J\times \Omega$ with $|\nabla u|\in
  L^2_{\text{loc}}(J\times\Omega)$. Denote $\nu_2=n(p-2)+4$. For any
  cylinder $Q=(t-\alpha R^2,t) \times B_R(x)$ with $2Q\subset J\times
  \Omega$ we have
  \begin{align}
    \label{eq:pest}
    \min\left\{ \sup_Q \frac {\abs{\nabla u}^{\frac {\nu_2}
          2}}{\alpha^{\frac{2-n}{2}}}, \sup_Q \frac{\abs{\nabla
          u}^2}{\alpha}\right\}\leq c\dashint_{2Q}\frac{\abs{\nabla
        u}^2}{\alpha}+\abs{\nabla u}^p \dz,
  \end{align}
  where the constant $c$ only depends on $p,n$.
\end{corollary}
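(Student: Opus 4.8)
The plan is to derive Corollary~\ref{cor:pcor} as a direct specialisation of Theorem~\ref{thm:main}. First I would set $\phi(t)=\frac 1p t^p$ with $p>2-\frac 4n$. This choice trivially satisfies Assumption~\ref{ass:main1}: it is convex, smooth on $(0,\infty)$, vanishes at $0$ together with its derivative, tends to infinity, and $\phi''(t)\,t = (p-1)t^{p-1} = (p-1)\,\phi'(t) \eqsim \phi'(t)$, so the characteristics of $\phi$ depend only on $p$. Hence any constant produced by Theorem~\ref{thm:main} depends only on $p$ and $n$, as claimed.

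Next I would check that $\rho(t) = \big(\phi(t)\big)^{n/2}\,t^{2-n}$ is almost increasing for this $\phi$. Compute $\rho(t) = p^{-n/2}\,t^{np/2}\,t^{2-n} = p^{-n/2}\,t^{\frac{np}{2}+2-n} = p^{-n/2}\,t^{\frac{n(p-2)+4}{2}} = p^{-n/2}\,t^{\nu_2/2}$, where $\nu_2 = n(p-2)+4$. The exponent $\nu_2/2$ is nonnegative precisely when $p \geq 2-\frac 4n$, and under the strict hypothesis $p>2-\frac 4n$ it is strictly positive, so $\rho$ is in fact strictly increasing and in particular almost increasing. This is exactly the scaling-deficit restriction flagged in the text. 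So all hypotheses of Theorem~\ref{thm:main} are met, and $u$ being a local weak solution of~\eqref{eq:pcaloric} with $|\nabla u|\in L^2_{\mathrm{loc}}$ is the same as being a $\phi$-caloric function in the sense of~\eqref{eq:phicaloric}--\eqref{eq:weak} with this $\phi$.

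Finally I would substitute into~\eqref{eq:main}. We have $\rho(|\nabla u|) = p^{-n/2}\,|\nabla u|^{\nu_2/2}$, so
\begin{align*}
  \frac{\rho(|\nabla u|)}{\alpha^{\frac{2-n}{2}}} = p^{-n/2}\,\frac{|\nabla u|^{\frac{\nu_2}{2}}}{\alpha^{\frac{2-n}{2}}},
\end{align*}
and on the right-hand side $\phi(|\nabla u|) = \frac 1p |\nabla u|^p$. Absorbing the harmless factors $p^{-n/2}$ and $\frac 1p$ into the constant $c$ (which then depends only on $p$ and $n$), inequality~\eqref{eq:main} becomes exactly~\eqref{eq:pest}. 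Since there is essentially no obstacle here beyond the two elementary verifications above, the only point that needs a little care is confirming that the constant in Theorem~\ref{thm:main} — stated to depend only on the characteristics of $\phi$ — does indeed translate into a constant depending only on $p$ and $n$; this is immediate because the characteristics of $\phi(t)=\frac1p t^p$ are themselves functions of $p$ alone.
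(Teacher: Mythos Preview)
Your proposal is correct and matches the paper's approach: the corollary is stated without proof as an immediate specialisation of Theorem~\ref{thm:main}, and your argument is precisely the verification that $\phi(t)=\tfrac1p t^p$ satisfies Assumption~\ref{ass:main1}, that $\rho(t)=p^{-n/2}t^{\nu_2/2}$ is (almost) increasing when $p>2-\tfrac4n$, and that substituting these expressions into~\eqref{eq:main} yields~\eqref{eq:pest} after absorbing the $p$-dependent factors into the constant.
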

\begin{remark}
  \label{rem:L2}
  Note that in Theorem~\ref{thm:main} as well as in
  Corollary~\ref{cor:pcor} we need $\nabla u \in L^2_{\loc}(2Q)$. For
  $p$-caloric functions with~$p\geq 2$, this regularity is natural. In
  the sub-linear case, this is not obvious. However, for the initial
  boundary value problem of $\phi$-caloric functions on the whole
  space~$\setR^n$ with initial values in~$W^{1,2}(\Omega)$, this
  regularity follows for example
  from~\cite[Theorem~5.1]{FreSch15}.
\end{remark}

Let us compare this novel result to the estimates
DiBenedetto~\cite[Chap VIII, Theorem~5.1 and
Theorem~5.2]{DiB93}. There the following estimates are proven.
\begin{align}
  \label{eq:pestdib}
  \begin{alignedat}{2}
    \sup_Q \frac{\abs{\nabla u}^2}{\alpha}&\leq \max \biggset{c
      \dashint_{2Q} \abs{\nabla u}^p\dz,\alpha^{\frac{p}{2-p}}}
    &\qquad&\text{ for } p\geq 2,
    \\
    \sup_Q \frac{\abs{\nabla
        u}^{\frac{\nu_2}{2}}}{\alpha^{\frac{2-n}{2}}}&\leq \max
    \biggset{ c \dashint_{2Q} \frac{\abs{\nabla
          u}^2}{\alpha}\dz,\alpha^{\frac{p}{2-p}}} &&\text{ for } p\leq 2.
  \end{alignedat}
\end{align}
It follows from these estimates that
\begin{align}
  \label{eq:1}
  \min\left\{ \sup_Q \frac {\abs{\nabla u}^{\frac {\nu_2}
        2}}{\alpha^{\frac{2-n}{2}}}, \sup_Q \frac{\abs{\nabla
        u}^2}{\alpha}\right\}\leq \max \biggset{ c\dashint_{2Q}\frac{\abs{\nabla
      u}^2}{\alpha}+\abs{\nabla u}^p \dz,\,\alpha^{\frac{p}{2-p}}}.
\end{align}
To compare this estimates with~\eqref{eq:pest} it is the easiest to
think of the case~$\alpha=1$. If the average integrals on the right
hand side of~\eqref{eq:1} are small, then we get~$1$ on the right
side. Thus, we get a local bound for the gradients, but do not know if
the gradients must be small. The estimate~\eqref{eq:pest} however
allows to deduce smallness of the gradients if the average integrals
are small. This is a novel improvement.

The proof of Theorem~\ref{thm:main} is based on the DeGiorgi iteration
technique. The necessary gain in integrability is achieved by the
following theorem, which is proved in
Section~\ref{sec:differentiability} and is of independent
interest. Note that the quantity~$V(\nabla u)$ in this theorem is a
very natural quantity for equations involving the $p$-Laplacian,
resp. $\phi$-Laplacian.
\begin{remark}
  \label{rem:pbound}
  The condition $p > 2- \frac{4}{n}$ is basically due to
  Theorem~\ref{thm:higherreg}. It follows from this theorem and the
  embedding $W^{1,2} \embedding L^{\frac{2n}{n-2}}$ applied
  to~$V(\nabla u)$ that
  \begin{align*}
    \nabla u \in L^\infty_{\loc}(J, L^2_{\loc}(\Omega)) \cap L^p_{\loc}(J,
    L^{\frac{pn}{n-2}}_{\loc}(\Omega)).
  \end{align*}
  Now the condition $p > 2 - \frac{4}{n}$ is  equivalent to the
  parabolic embedding
  \begin{align*}
    L^\infty_{\loc}(J, L^2_{\loc}(\Omega)) \cap L^p_{\loc}(J,
    L^{\frac{pn}{n-2}}_{\loc}(\Omega)) \embedding L^2_{\loc}(J,
    L^2_{\loc}(\Omega)). 
  \end{align*}
  % The regularity of Theorem~\ref{thm:nablaV} corresponds formally to
  % the testfunction~$\Delta u$. To overcome the need of the parabolic
  % embedding into~$L^2_{\loc}(J, L^2_{\loc}(\Omega))$ it would be
  % necessary to also use the testfunction~$\partial_t^2 u$. Although
  % this is locally possible, it seems impossible to combine this with
  % the DiGiorgi argument, which needs suitable truncations with respect
  % to the gradients. See the proof of Theorem~\ref{thm:main}.
\end{remark}

We have shown in Theorem~\ref{thm:main} the local boundedness of the
gradients. We want to apply the results of
Lieberman~\cite[Corollary~2.1]{Lie06} to obtain H\"older continuity of
the gradients. Lieberman studied weak solutions of
\eqref{eq:phicaloric}
\begin{align*}
  \partial_t u - \divergence \big(F(\abs{\nabla u}) \nabla u \Big)=0
\end{align*}
with certain conditions on~$F$. In our situation we have
\begin{alignat*}{2}
  F(t) &= \frac{\phi'(t)}{t}
  & \qquad \text{and} \qquad
  \frac{t F'(t)}{F(t)} &= \frac{\phi''(t)\,t}{\phi'(t)} -1.
\end{alignat*}
Therefore, condition~(1.2) of~\cite{Lie06}, namely $\delta-1 \leq
\frac{t F'(t)}{F(t)} \leq g_0 -1$ for some $\delta,g_0>0$ is
equivalent to our condition~\eqref{eq:assmain1}.  Also conditions
(2.2a) is a consequence of our condition~\eqref{eq:assmain1}, see
Section~\ref{sec:differentiability}. However, to apply the result of
Lieberman, we need to assume the following off-diagonal uniform continuity
of~$\phi''$.
\begin{assumption}
  \label{ass:lieb}
  Let $\phi$ satisfy Assumption~\ref{ass:main1}. Further assume that
  there exists a continuous function~$\omega\,:\, (0,\frac 12) \to
  \setR$ with $\omega(0)=0$ and
  \begin{align}
    \label{eq:ass-lieb}
    \bigabs{\phi''(s) - \phi''(t)} &\leq c\, \omega\bigg(
    \frac{\abs{s-t}}{t} \bigg) \phi''(t) \qquad \text{for all $s,t \geq
      0$ with }\abs{s-t}
    < \tfrac 12 t.
  \end{align}
\end{assumption}
We are now able to apply the results of Lieberman to
our setting.
\begin{theorem}[{Lieberman~\cite[Corollary~2.1]{Lie06}}]
  \label{thm:lieb}
  Let $\phi$ satisfy Assumption~\ref{ass:lieb}.  If $u$ is a weak
  solution to \eqref{eq:phicaloric} on $J\times\Omega$, with
  $\nabla u \in
  L^2_{\text{loc}}(J,L^2_{\text{loc}}(\Omega))$, then $\nabla u$ is
  locally H\"older continuous in $J\times \Omega$.  Moreover, there
  exists a $\mu\in (0,1)$ such that for any cylinder
  $Q_R=(t-R^2,t)\times B_R(x)\subset J\times \Omega$
  and~$Q_R=(t-r^2,t)\times B_r(x)$ with $r < \frac12 R$ the following
  estimate is satisfied:
  \begin{align*}
    \sup_{z,z_2 \in Q_r} \abs{\nabla u(z) - \nabla u(z_2)}\leq c\,
    \sup_{Q_R} \abs{\nabla u} \cdot \bigg(\frac{r}{R}\, \kappa\big(
    \sup_{Q_R} \abs{\nabla u}\big) \bigg)^\mu
  \end{align*}
  with $\kappa(t) := \max \set{\sqrt{\phi''(t)}, \sqrt{1/\phi''(t)}}$
  where $c$ depends on the characteristics of~$\phi$ and~$\omega$ from
  Assumption~\ref{ass:lieb}.
\end{theorem}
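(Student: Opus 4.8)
The plan is to obtain Theorem~\ref{thm:lieb} as a direct consequence of Corollary~2.1 of Lieberman~\cite{Lie06}: after checking that our operator fits Lieberman's structural framework and that, by Theorem~\ref{thm:main}, the solution has the locally bounded gradient that his result presupposes, the asserted estimate is obtained by rescaling his oscillation bound.

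First I would establish the a priori gradient bound. Lieberman's estimates are formulated for weak solutions of \eqref{eq:phicaloric} whose gradient is already locally bounded, whereas we only know $\nabla u\in L^2_{\text{loc}}(J,L^2_{\text{loc}}(\Omega))$. I would therefore invoke Theorem~\ref{thm:main} (together with a standard covering argument, and under its hypothesis that $\rho$ is almost increasing) to conclude that $\nabla u$ is locally bounded in $J\times\Omega$. In particular $M:=\sup_{Q_R}\abs{\nabla u}<\infty$; the case $M=0$ is trivial, so we assume $M>0$.

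Second, I would verify Lieberman's structure conditions for $F(t)=\phi'(t)/t$. As recorded above, $tF'(t)/F(t)=\phi''(t)\,t/\phi'(t)-1$, so \eqref{eq:assmain1} yields $\delta-1\le tF'(t)/F(t)\le g_0-1$ with $\delta,g_0>0$ depending only on the characteristics of~$\phi$, which is condition~(1.2) of~\cite{Lie06}; the structure condition~(2.2a) is likewise a consequence of \eqref{eq:assmain1}, see Section~\ref{sec:differentiability}. The remaining hypothesis of \cite[Corollary~2.1]{Lie06}, an off-diagonal modulus of continuity for the coefficient $F$ — equivalently for $\phi''$ — is exactly what Assumption~\ref{ass:lieb} provides, via $\phi''(t)=tF'(t)+F(t)$ and $\phi''(t)\eqsim F(t)$: the modulus~$\omega$ of~\eqref{eq:ass-lieb} supplies the required modulus. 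Hence all hypotheses of \cite[Corollary~2.1]{Lie06} hold, with parameters depending only on the characteristics of~$\phi$ and on~$\omega$.

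Third, I would apply \cite[Corollary~2.1]{Lie06} on $Q_R$ to obtain local Hölder continuity of $\nabla u$ with some exponent $\mu=\mu(n,\delta,g_0)\in(0,1)$ and an oscillation estimate on concentric cylinders, and then rescale. Lieberman measures oscillation on cylinders whose parabolic aspect ratio is governed by a power of $F(M)\eqsim\phi''(M)$; expressing the inclusion of the Euclidean cylinder $Q_r$ into the corresponding intrinsic one, and separating the degenerate case $\phi''(M)\ge1$ from the singular case $\phi''(M)\le1$, produces precisely the factor $\kappa(M)=\max\set{\sqrt{\phi''(M)},\sqrt{1/\phi''(M)}}$ in front of $r/R$, while the assumption $r<\tfrac12 R$ leaves the room needed for this rescaling. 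The one place that requires care is exactly this last translation between intrinsic and Euclidean cylinders: one must check that the comparison factor is controlled by $\kappa(M)$ in both the degenerate and the singular regime, and that after rescaling every constant still depends only on the characteristics of~$\phi$ and on~$\omega$. Everything else is a verbatim application of the cited results.
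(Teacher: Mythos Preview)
The paper does not give a proof of this theorem at all: it is stated as a direct citation of Lieberman~\cite[Corollary~2.1]{Lie06}, and the surrounding text merely records that Assumption~\ref{ass:main1} yields Lieberman's conditions~(1.2) and~(2.2a) and that Assumption~\ref{ass:lieb} supplies the remaining continuity hypothesis. Your proposal is therefore not to be compared against a proof in the paper, but it faithfully spells out the reduction the authors have in mind---verify Lieberman's structural assumptions, then quote his oscillation estimate---and in that sense it is correct and in line with the paper.

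One point worth flagging: the paper's own statement of Theorem~\ref{thm:lieb} is slightly awkward, since the hypothesis is only $\nabla u\in L^2_{\text{loc}}$ while Lieberman's Corollary~2.1 presupposes locally bounded gradients (and indeed the estimate features $\sup_{Q_R}\abs{\nabla u}$). The sentence immediately after the theorem, ``This theorem in combination with our Theorem~\ref{thm:main} implies the local H\"older continuity of~$\phi$-caloric functions,'' indicates that the authors intend the boundedness step to be supplied externally by Theorem~\ref{thm:main}. You do exactly this, and you are right to note parenthetically that invoking Theorem~\ref{thm:main} requires the extra almost-increasing hypothesis on~$\rho$, which is not listed among the hypotheses of Theorem~\ref{thm:lieb}; that gap is in the paper's formulation, not in your argument.
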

This theorem in combination with our Theorem~\ref{thm:main} implies
the local H\"older continuity of~$\phi$-caloric functions.

At this point we wish to emphasize the importance of the regularity
results of Theorem~\ref{thm:main} and Theorem~\ref{thm:lieb} for
systems with inhomogeneous right hand side. Regularity theory for
non-linear PDE with inhomogeneous right hand side is successfully
achieved by the so called non-linear \Calderon{} Zygmund theory. It
was first used for the $p$-Laplacian by Iwaniec~\cite{Iwa82}, see
also~\cite{CafPer98}. The core of this theory is the combination of a
\Calderon{}-Zygmund decomposition (e.g. of the gradient of the
solution) combined with local comparison with $p$-harmonic,
resp. $p$-caloric functions. \textit{The local regularity of the
  homogeneous system is therefore of fundamental importance}. The
local $L^\infty$-bound (of DiBenedetto and Friedman) of the gradient of $p$-caloric
functions was successfully used to derive higher integrability
results~\cite{AceMin07}. The H\"older estimates for the gradients of
$p$-caloric functions has been used to show H\"older continuity for the
inhomogeneous system~\cite{DiBFri85,Mis02} and to derive estimates of
$\setBMO$-Type~\cite{Sch13}. Moreover, it is a necessary tool for the
proof of pointwise potential estimates~\cite{KuuMin13,KuuMin14} and
for almost everywhere regularity results by p-caloric
approximation~\cite{BogDuzMin13}.

% \begin{remark}
%   The theory of DiBenedetto implies bounded gradients for all
%   $1<p<\infty$, provided some higher integrability of the gradient is
%   known. It seems, that if the growth conditions are relaxed towards
%   linear growth, the assumptions have to be strengthened
%   simultaneously. To keep our argumentation neat, we decided to focus
%   on the case, where the a-priori information $\nabla u$ is locally in
%   $L^2$ is sufficient. In this case our proof fully covers the known
%   $p$-Laplace heat equation. Moreover, we do otherwise not have to
%   divide between singular and degenerate behavior anymore, which
%   includes a wide class of Orlicz functions.
% \end{remark}  

\section{Differentiability}
\label{sec:differentiability}

In this section we prove the higher regularity estimates of
Theorem~\ref{thm:instatenergy}. For this we have to start with a few
properties on our N-function~$\phi$ and its relations to the
quantities of our equation. Again we assume that~$\phi$ satisfies
Assumption~\ref{ass:main1}.

First of all, we define the quantities $A,V \,:\, \setR^{n \times N}
\to \setR^{n \times N}$ by
\begin{align*}
  A(P)&:=\phi'(|P|) \frac{P}{\abs{P}},
  \\
  V(P)&:= \sqrt{\phi'(\abs{P}) \abs{P}}\, \frac{P}{\abs{P}}.
\end{align*}
Then our $\phi$-Laplacian heat equation can be written as
\begin{align*}
  \partial_t u - \divergence\big(A(\nabla u)\big) =0.
\end{align*}
The quantity~$V(\nabla u)$ is well known from the regularity theory of
systems with a $p$-Laplace, in which case $V(P) =
\abs{P}^{\frac{p-2}{2}} P$. 

Moreover, we need the \emph{shifted N-function} $\phi_a$
from~\cite{DieE08}, which are defined for $t,a \geq 0$ as
\begin{align}
  \label{eq:defphia}
  (\phi_a)'(t):=\frac{\phi'(a+t)}{a+t}t
\end{align}
We have $(\phi_a)_b = \phi_{a+b}$.  Note that the family~$\phi_a$ also
satisfies Assumption~\ref{ass:main1} with characteristics uniformly
bounded with respect to~$a\geq 0$. This implies that the families
$\phi_a$ and $(\phi_a)^*$ satisfy the $\Delta_2$-condition with
constants independent of~$a \geq 0$. Note that $\phi_0 =
\phi$. Moreover, uniformly in $a,t \geq 0$
\begin{align}
  \label{eq:2}
  (\phi_a)(t) &\eqsim \phi_a'(t)\,t \eqsim \phi_a''(t)\,t^2 \eqsim
  \phi''(a+t)\, t^2,
  \\
  \label{eq:3}
  (\phi_a)^*(t) &\eqsim (\phi^*)_{\phi'(a)}(t) \eqsim (\phi^*)''(\phi'(a)+t)\,t^2.
\end{align}

The following lemma from~\cite{DieE08} summarizes many important
relations between $A$, $V$ and the shifted N-functions $\phi_a$.
\begin{lemma}
  \label{lem:hammer}
  Uniformly in $P,Q \in \setR^{n \times N}$ we have
  \begin{align}
    \label{eq:hammer1}
    (A(P)-A(Q)):(P-Q) &\eqsim \phi_{\abs{P}}(\abs{P-Q}) \eqsim
    \abs{V(P)-V(Q)}^2,
    \\
    \label{eq:hammer2}
    |A(P)-A(Q)| &\eqsim \phi_{\abs{P}}'(\abs{P-Q}).
  \end{align}
  In combination with~\eqref{eq:2} many variants of these equivalences follow.
\end{lemma}
We can apply the standard Young's inequality for N-function, which are
in our context as follows: For every~$\delta>0$ there
exists~$c_\delta>0$ such that uniformly in~$a,s,t \geq 0$
\begin{align}
  \label{eq:young}
  \begin{aligned}
    \phi_a'(s)\,t &\leq \delta\,\phi_a(s) + c_\delta(\phi_a)^*(t),
    \\
    \phi_a'(s)\,t &\leq c_\delta\,\phi_a(s) + \delta(\phi_a)^*(t).
  \end{aligned}
\end{align}
Moreover, the following estimates are versions of Young's inequalities
and can be found in~\cite{DieK08}.
\begin{lemma}[Shift-change lemma]
  \label{lem:shiftchange}
  For every~$\delta>0$ there exists $c_\delta>0$ such that for all $P,Q
  \in \setR^{n \times N}$ and~$t\geq 0$ we have
  \begin{align*}
    \phi_{\abs{P}}(t) &\leq c_\delta \phi_{\abs{Q}}(t) + \delta\,
    \abs{V(P)-V(Q)}^2,
    \\
    (\phi_{\abs{P}})^*(t) &\leq c_\delta (\phi_{\abs{Q}})^*(t) +
    \delta\, \abs{V(P)-V(Q)}^2.
  \end{align*}
\end{lemma}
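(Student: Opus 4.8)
The plan is to reduce both inequalities to a single scalar inequality for the shifted $N$-functions and then to prove that by a short case distinction, using that Assumption~\ref{ass:main1} forces $\phi$ (hence also $\phi^*$) to satisfy both $\Delta_2$ and $\nabla_2$. For the reduction I would invoke Lemma~\ref{lem:hammer} and the symmetry of $(A(P)-A(Q)):(P-Q)$ in $P,Q$ to get $\abs{V(P)-V(Q)}^2\eqsim\phi_{\abs{P}}(\abs{P-Q})\eqsim\phi_{\abs{Q}}(\abs{P-Q})$; since $\abs{P-Q}\ge\bigabs{\abs{P}-\abs{Q}}$ and each $\phi_a$ is nondecreasing, this already gives $\abs{V(P)-V(Q)}^2\gtrsim\phi_{\abs{P}}(\bigabs{\abs{P}-\abs{Q}})$. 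Writing $a=\abs{P}$, $b=\abs{Q}$, it therefore suffices to establish the scalar estimate
\begin{align}
  \label{eq:scshift}
  \phi_a(t)\le c_\delta\,\phi_b(t)+\delta\,\phi_a(\abs{a-b})\qquad\text{for all }a,b,t\ge 0.
\end{align}
For the conjugate inequality I would argue identically, using in addition $\abs{A(P)}=\phi'(a)$, so that $\abs{A(P)-A(Q)}\ge\bigabs{\phi'(a)-\phi'(b)}$, together with the equivalence $(\phi_a)^*\eqsim(\phi^*)_{\phi'(a)}$ of~\eqref{eq:3}; the conjugate inequality then reduces to~\eqref{eq:scshift} applied to the $N$-function $\phi^*$ (which, it is routine to check, again satisfies Assumption~\ref{ass:main1}), with the shifts $\phi'(a),\phi'(b)$ in place of $a,b$.

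Two consequences of~\eqref{eq:assmain1} will do the work. First, $\phi''$ is doubling: from $\phi''(s)\,s\eqsim\phi'(s)$ and $\phi'(2s)\lesssim\phi'(s)$ one obtains $\phi''(2s)\eqsim\phi''(s)$, hence $\phi''(\lambda s)\eqsim\phi''(s)$ with a constant polynomial in $\lambda\ge 1$; combined with $\phi_a(t)\eqsim\phi''(a+t)\,t^2$ from~\eqref{eq:2}, this compares $\phi_a(t)$ with $\phi_b(t)$ whenever $a+t\eqsim b+t$. Second, $\phi\in\nabla_2$ (equivalently $\phi^*\in\Delta_2$), so there is $p_0>1$ with $\phi(s)/s^{p_0}$ almost increasing; combined with $\phi''(s)\,s^2\eqsim\phi(s)$ this yields the gain $\phi_a(s)\eqsim\phi''(a+s)\,s^2\lesssim\phi(a+s)\lesssim\bigl((a+s)/r\bigr)^{p_0}\phi(r)$ whenever $a+s\le r$.

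Now to~\eqref{eq:scshift}. If $a+t\eqsim b+t$, then $\phi''(a+t)\eqsim\phi''(b+t)$, so $\phi_a(t)\eqsim\phi_b(t)$, which is more than enough. Otherwise one of $a+t,b+t$ exceeds twice the other; a short computation then shows that this larger quantity is comparable to $\abs{a-b}$ while $a,b\lesssim\abs{a-b}$, whence $\phi_a(\abs{a-b})\eqsim\phi''(\abs{a-b})\,\abs{a-b}^2\eqsim\phi(\abs{a-b})$. I would split according to which of $a+t,b+t$ dominates and, within each case, according to whether the relevant small quantity ($t$ when $a+t$ dominates, $a+t$ when $b+t$ dominates) is at most $\delta^{\,c}\abs{a-b}$ or not: in the first alternative the quadratic factor $t^2$, respectively the $\nabla_2$-gain above, produces a factor $\delta$ and $\phi_a(t)\lesssim\delta\,\phi_a(\abs{a-b})$; in the second, $a+t$ and $b+t$ turn out to be comparable up to a factor $\lesssim\delta^{-c}$, doubling of $\phi''$ gives $\phi''(a+t)\le c_\delta\,\phi''(b+t)$, and hence $\phi_a(t)\le c_\delta\,\phi_b(t)$. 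Collecting the cases and renaming $\delta$ once yields~\eqref{eq:scshift}, which, together with the reduction step, proves the lemma.

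The only step that is not a one-line comparison is this ``intermediate'' regime, where $a+t\not\eqsim b+t$ but the smaller of the two is not negligible relative to $\abs{a-b}$: there neither the crude bound $\phi_a(t)\le\phi_a(\abs{a-b})$ nor a naive absorption into $\delta\,\phi_a(\abs{a-b})$ is available, and one genuinely pays a polynomial factor $c_\delta=\delta^{-c}$ dictated by the doubling exponent of $\phi''$ (and, for the conjugate inequality, of $(\phi^*)''$). The small remaining subtlety is to carry out this bookkeeping so that both displayed inequalities end up holding with one common $c_\delta$.
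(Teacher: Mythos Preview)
The paper does not prove this lemma at all; it simply records that the estimates ``can be found in~\cite{DieK08}''. So there is nothing in the paper to compare your argument against line by line.

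That said, your proposal is correct and is in the spirit of the standard proof. The reduction step is sound: Lemma~\ref{lem:hammer} together with the reverse triangle inequality indeed gives
\[
\abs{V(P)-V(Q)}^2 \gtrsim \phi_{\abs{P}}\big(\bigabs{\,\abs{P}-\abs{Q}\,}\big),
\qquad
\abs{V(P)-V(Q)}^2 \gtrsim (\phi_{\abs{P}})^*\big(\bigabs{\,\phi'(\abs{P})-\phi'(\abs{Q})\,}\big),
\]
so both displayed inequalities follow from the single scalar estimate~\eqref{eq:scshift} (applied to~$\phi$, respectively to~$\phi^*$ with shifts~$\phi'(a),\phi'(b)$; that $\phi^*$ again satisfies Assumption~\ref{ass:main1} is indeed a one-line computation with $(\phi^*)'=(\phi')^{-1}$). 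Your case analysis for~\eqref{eq:scshift} is also correct: when $a+t\eqsim b+t$ the two-sided doubling of~$\phi''$ (which you correctly derive from $\phi''(s)s^2\eqsim\phi(s)$ and $\phi\in\Delta_2\cap\nabla_2$) gives $\phi_a(t)\eqsim\phi_b(t)$; otherwise one of $a,b$ is comparable to $\abs{a-b}$, and your threshold split at $\delta^{c}\abs{a-b}$ cleanly separates the regime where $\phi_a(t)\lesssim\delta\,\phi_a(\abs{a-b})$ from the regime where $a+t$ and $b+t$ are comparable up to a $\delta^{-c}$ factor, so that doubling yields $\phi_a(t)\le c_\delta\,\phi_b(t)$. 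A minor remark: in the ``$b+t$ dominates'' subcase you invoke~$\nabla_2$ for the gain, but plain convexity $\phi(\lambda s)\le\lambda\phi(s)$ for $\lambda\le 1$ already suffices there; the genuine use of $\nabla_2$ is in getting two-sided doubling of~$\phi''$. The common~$c_\delta$ for both inequalities is obtained simply by taking the maximum of the two constants coming from~$\phi$ and~$\phi^*$.
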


Moreover, we will need the following new estimate.
\begin{lemma}
  \label{lem:shift-stabilization}
    There holds
    \begin{align*}
      \abs{\phi_a'(t) - \phi'(t)} &\leq c\, \phi_t'(a),
    \end{align*}
    with $c$ independent of~$a,t \geq 0$.
\end{lemma}
\begin{proof}
  We have
  \begin{align*}
   \abs{\phi_a'(t) - \phi'(t)} 
    &= \biggabs{\frac{\phi'(a+t)}{a+t}t -
                                 \phi'(t)}
    \leq \frac{t}{a+t} \abs{\phi'(a+t) - \phi'(t)} + \frac{a}{a+t}
      \phi'(t)
  \end{align*}
  From~\eqref{eq:hammer2} with $P=(a+t) M$, $Q=t M$ and $\abs{M}=1$
  it follows that
  $\abs{\phi'(a+t)-\phi'(t)} \leq c\, \phi_t'(a)$, so
  \begin{align*}
   \abs{\phi_a'(t) - \phi'(t)} 
    &\leq c\, \frac{t}{a+t} \phi_t'(a) + \frac{a}{a+t}
      \phi'(a+t)
    \leq c\, \phi_t'(a). \qedhere
  \end{align*}
\end{proof}
The higher regularity of $\phi$-caloric will be obtained by the
difference quotient technique, which corresponds formally to a
localized version of the testfunction~$\Delta u$. 
Therefore we introduce the following notation: For a function $f:\setR^n\to\setR^N$
we write $(\tau_{h,j} f)(x)=f(x+he_j)-f(x)$,
$(\delta_{h,j}f)(x)=h^{-1}(f(x+he_j)-f(x))$ and $\delta_h f =
(\delta_{h,1} f, \dots, \delta_{h,n} f)$. Moreover, we use the
translation operator $T_x(y)=x+y$.
The following theorem is a 
special case of Theorem~\ref{thm:instatenergy} below with~$f=1$.
\begin{theorem}
  \label{thm:higherreg}
  Let $\phi$ satisfy Assumption~\ref{ass:main1} and let $u$ be a local
  weak solution to~\eqref{eq:phicaloric} on a cylindrical domain
  $J\times\Omega\subset\mathbb{R}^{1+n}$.  For any cylinder $Q=(t-\alpha
  R^2,t) \times B_R(x) \compactsubset J\times \Omega$, we find:
  
  If at the initial time $u(t-\alpha R^2)\in W^{1,2}(B)$, then for
  $\eta\in C_0^\infty(B)$ with $0 \leq \eta \leq 1$ we have
  \begin{align}\label{eq:cor-time}
    \begin{aligned}
      \lefteqn{\sup_I\frac 1 {\alpha R^2} \dashint_B \abs{\nabla u}^2
        \eta^q\dx+\dashint_Q |\nabla V(\nabla u)|^2\eta^q \dz}
      \qquad &
      \\
      &\lesssim \dashint_{Q}|V(\nabla u)|^2\norm{\nabla\eta}^2_\infty
      \dz+\frac1{\alpha R^2}\dashint_{B} \abs{\nabla
        u(t-\alpha R^2)}^2 \eta^q\dz.
    \end{aligned}
  \end{align} 
  If $\nabla u \in
  L_{\text{loc}}^2(J\times\Omega)$, then for $\eta\in
  C_0^\infty(Q)$ with $0 \leq \eta \leq 1$ we have
  \begin{align}
  \label{eq:higherreg}
    \begin{aligned}
      \lefteqn{\sup_I\frac 1 {\alpha R^2} \dashint_B \abs{\nabla u}^2
        \eta^q\dx+\dashint_Q |\nabla V(\nabla u)|^2\eta^q \dz}
      \qquad &
      \\
      &\lesssim \dashint_{Q}|V(\nabla u)|^2\norm{\nabla\eta}^2_\infty
      \dz+\dashint_Q \abs{\nabla
        u}^2 \eta^{q-1}\abs{\partial_t\eta}\dz.
    \end{aligned}
  \end{align}
  Here $q>1$ is fixed such that $\phi_a(\eta^{q-1}t)\lesssim
  \eta^q\phi_a(t)$, which exists due to \cite[(6.25)]{DieE08}.
\end{theorem}
\begin{proof}
  The proof is based on the difference quotient technique and uses the
  test function $\xi = \delta_{-h,j}(\eta^q \delta_{h,j}u)$. The
  proof is very similar to the one in~\cite[Theorem~11]{DieE08}. The terms
  involving $-\divergence(\phi'(\abs{\nabla u}) \frac{\nabla
    u}{\abs{\nabla u}})$ are exactly as in~\cite{DieE08}. The terms
  involving the time derivatives are also quite standard to handle for a parabolic system. Indeed, they can be handled via a Steklov average as it was done in \cite[Chap. VIII]{DiB93}.
\end{proof}

\begin{theorem} 
  \label{thm:instatenergy} 
  Let $\phi$ satisfy Assumption~\ref{ass:main1} and let $u$ be a local
  weak solution to~\eqref{eq:phicaloric} on a cylindrical domain
  $J\times\Omega\subset\mathbb{R}^{1+n}$ with
  $\nabla u \in L_{\text{loc}}^2(J\times\Omega)$. Further, let
  $f:[0,\infty)\rightarrow[0,\infty)$ be a non-decreasing function and
  define $H\,:\, [0,\infty)\rightarrow[0,\infty)$ by $H(0)=0$ and $H'(t)=tf(t)$.
  For any cylinder
  $Q=I \times B =(t-\alpha R^2,t) \times B_R(x) \compactsubset J\times \Omega$ and
  $\eta\in C_0^\infty(Q)$ with $0 \leq \eta \leq 1$ we have
  \begin{align}
    \label{eq:instatenergy}
    \begin{aligned}
      \lefteqn{\sup_I\frac 1 {\alpha R^2} \dashint_B
        H(\abs{\nabla u})\eta^q\dx+\dashint_Q |\nabla V(\nabla
        u)|^2\eta^qf(\abs{\nabla u})\dz} \qquad &
      \\
      &\lesssim \dashint_{Q}|V(\nabla u)|^2\norm{\nabla\eta}^2_\infty
      f(\abs{\nabla u})\dz+\dashint_Q H(\abs{\nabla
        u})\eta^{q-1}\abs{\partial_t\eta}\dz. 
    \end{aligned}
  \end{align}
  At this $q>1$ is fixed such that $\phi_a(\eta^{q-1}t)\lesssim
  \eta^q\phi_a(t)$, which exists due to \cite[(6.25)]{DieE08}.
\end{theorem}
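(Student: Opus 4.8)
The plan is to proceed as in the proof of Theorem~\ref{thm:higherreg}, i.e.\ by the difference-quotient technique with the test function $\xi=\delta_{-h,j}\bigl(\eta^{q}f(\abs{\nabla u})\,\delta_{h,j}u\bigr)$, summed over $j=1,\dots,n$, followed by a passage to the limit $h\to0$; as there, this limit is justified by $V(\nabla u)\in W^{1,2}_{\loc}$ (Theorem~\ref{thm:higherreg}) together with $\nabla^{2}u\in L^{1}_{\loc}$ and $\nabla(A(\nabla u))\in L^{1}_{\loc}$ (Lemma~\ref{lem:W21}, applied on time slices), the argument being parallel to \cite[Theorem~11]{DieE08} and \cite[Chap.~VIII]{DiB93}. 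Before that I would reduce to the case that $f$ is smooth, bounded and non-decreasing: approximating the given $f$ from below by such functions $f_{k}\uparrow f$ (by mollification) and setting $H_{k}(t):=\int_{0}^{t}sf_{k}(s)\ds\uparrow H(t)$, it suffices to prove \eqref{eq:instatenergy} for each pair $(f_{k},H_{k})$ and let $k\to\infty$ by monotone convergence; the supremum in time is harmless, since $\sup_{s}a_{k}(s)\le b_{k}$ with $a_{k}\uparrow a$, $b_{k}\uparrow b$ forces $\sup_{s}a(s)\le b$. Thus from now on $f\in C^{1}$, $f$ bounded and $f'\ge0$; note that the boundedness of $f$ makes $f(\abs{\nabla u})$ an admissible multiplier even though $\abs{\nabla u}$ is not yet known to be bounded.

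In the limit $h\to0$ the testing produces the identity
\begin{align*}
  \int\eta^{q}f(\abs{\nabla u})\sum_{j}\partial_{t}\partial_{j}u\cdot\partial_{j}u\dz
  +\int\sum_{j}DA(\nabla u)[\partial_{j}\nabla u]:\nabla\bigl(\eta^{q}f(\abs{\nabla u})\,\partial_{j}u\bigr)\dz=0 ,
\end{align*}
where $DA(P)[Q]=\tfrac{\phi'(\abs{P})}{\abs{P}}\,Q+\tfrac{\phi''(\abs{P})\abs{P}-\phi'(\abs{P})}{\abs{P}^{3}}\,(P:Q)\,P$. Since $\sum_{j}\partial_{j}u\cdot\partial_{t}\partial_{j}u=\tfrac12\partial_{t}\abs{\nabla u}^{2}=\abs{\nabla u}\,\partial_{t}\abs{\nabla u}$ and $H'(t)=tf(t)$, the first integral equals $\int\eta^{q}\,\partial_{t}H(\abs{\nabla u})\dz$; after an integration by parts in time (the boundary term at the lower time level vanishes since $\eta\in C_{0,\text{par}}^{\infty}$) and a Steklov average capturing the supremum over the upper time level, this yields the first term on the left of \eqref{eq:instatenergy} and an error bounded by $R^{2}\dashint_{Q_{2R}}H(\abs{\nabla u})\eta^{q-1}\abs{\partial_{t}\eta}\dz$. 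In the second integral one expands $\nabla(\eta^{q}f(\abs{\nabla u})\partial_{j}u)$ by the product rule into the three contributions $\eta^{q}f(\abs{\nabla u})\,\partial_{j}\nabla u$, $q\eta^{q-1}f(\abs{\nabla u})\,\nabla\eta\otimes\partial_{j}u$ and $\eta^{q}f'(\abs{\nabla u})\,\nabla\abs{\nabla u}\otimes\partial_{j}u$, and splits the integral into $I_{1}+I_{2}+I_{3}$. By the infinitesimal version of Lemma~\ref{lem:hammer} ($DA(P)[Q]:Q\eqsim\abs{DV(P)[Q]}^{2}$) together with $\sum_{j}\abs{DV(\nabla u)[\partial_{j}\nabla u]}^{2}=\abs{\nabla V(\nabla u)}^{2}$ one gets $I_{1}\gtrsim\int\eta^{q}f(\abs{\nabla u})\abs{\nabla V(\nabla u)}^{2}\dz$, the second term on the left of \eqref{eq:instatenergy}. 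For $I_{2}$ I would use $\abs{DA(P)[Q]}\lesssim\sqrt{\phi''(\abs{P})}\,\abs{DV(P)[Q]}$ and $\sqrt{\phi''(\abs{\nabla u})}\,\abs{\nabla u}\eqsim\abs{V(\nabla u)}$, Cauchy--Schwarz in $j$ (with $\sum_{j}\abs{\partial_{j}u}^{2}=\abs{\nabla u}^{2}$) and Young's inequality; absorbing a small multiple of $\int\eta^{q}f(\abs{\nabla u})\abs{\nabla V(\nabla u)}^{2}$ into $I_{1}$ leaves $\lesssim R^{2}\dashint_{Q_{2R}}\abs{V(\nabla u)}^{2}\norm{\nabla\eta}_{\infty}^{2}f(\abs{\nabla u})\dz$ (for $q$ chosen, as in the statement, large enough that no negative power of $\eta$ occurs).

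The decisive point is the term $I_{3}=\int\eta^{q}f'(\abs{\nabla u})\sum_{j}DA(\nabla u)[\partial_{j}\nabla u]:(\nabla\abs{\nabla u}\otimes\partial_{j}u)\dz$. I claim that, pointwise,
\begin{align*}
  \sum_{j}DA(\nabla u)[\partial_{j}\nabla u]:(\nabla\abs{\nabla u}\otimes\partial_{j}u)\ge0 ;
\end{align*}
since $f'\ge0$ this forces $I_{3}\ge0$, and $I_{3}$ may simply be discarded. To prove the claim, insert $Q=\partial_{j}\nabla u$ (and $P=\nabla u$) into the formula for $DA$ above and sum over $j$, using the two scalar identities $\sum_{j,\alpha}(\partial_{j}u_{\alpha})\,\partial_{k}\partial_{j}u_{\alpha}=\abs{\nabla u}\,\partial_{k}\abs{\nabla u}$ and $\nabla u:\partial_{j}\nabla u=\abs{\nabla u}\,\partial_{j}\abs{\nabla u}$ (both being $\tfrac12\partial_{\bullet}\abs{\nabla u}^{2}$). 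The first summand of $DA$ then contributes $\phi'(\abs{\nabla u})\,\abs{\nabla\abs{\nabla u}}^{2}\ge0$, and the second contributes $\tfrac{\phi''(\abs{\nabla u})\abs{\nabla u}-\phi'(\abs{\nabla u})}{\abs{\nabla u}^{2}}\,v^{\top}(\nabla u\,\nabla u^{\top})v$, where $v:=\nabla\abs{\nabla u}$ and $0\le v^{\top}(\nabla u\,\nabla u^{\top})v=\abs{\nabla u^{\top}v}^{2}\le\abs{\nabla u}^{2}\abs{v}^{2}$. If $\phi''(\abs{\nabla u})\abs{\nabla u}\ge\phi'(\abs{\nabla u})$ both summands are $\ge0$; otherwise, bounding $v^{\top}(\nabla u\,\nabla u^{\top})v$ above by $\abs{\nabla u}^{2}\abs{v}^{2}$ in the (now negative) second summand, the total is $\ge\phi''(\abs{\nabla u})\abs{\nabla u}\,\abs{v}^{2}\ge0$ by the convexity of $\phi$. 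This proves the claim.

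Collecting the time term together with $I_{1}$, $I_{2}$, $I_{3}$, dividing by $\abs{B_{R}}$ and by $\alpha$, and passing from $Q_{R}$ to $Q_{2R}$ in the right-hand integrals (which costs only a dimensional constant), gives \eqref{eq:instatenergy}. I expect the main obstacle to be exactly the pointwise sign of $I_{3}$ just discussed: it is this inequality, resting on the convexity of $\phi$ and on the monotonicity of $f$, that makes a nonlinear weight $f(\abs{\nabla u})$ admissible at all and dictates that $H$ must be the indicated primitive of $t\mapsto tf(t)$. The remaining, by now standard but technically delicate, part --- the rigorous difference-quotient computation and the passage $h\to0$ --- relies entirely on the differentiability statements of Section~\ref{sec:differentiability}.
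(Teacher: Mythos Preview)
Your proposal is correct and follows essentially the same route as the paper's own proof: both use the difference-quotient test function $\delta_{-h,j}(\eta^q f\,\delta_{h,j}u)$, both split the elliptic part into the three pieces you call $I_1,I_2,I_3$ (the paper's $\text{V}_j,\text{VI}_j,\text{IV}_j$), and both identify the sign of the $f'$-term as the decisive point, disposing of it by exactly the computation you give (the paper writes it as $\tfrac{\phi'(v)}{v}\bigl(|\nabla v|^2-\tfrac{|\nabla v\cdot\nabla u|^2}{v^2}\bigr)+\phi''(v)\tfrac{|\nabla v\cdot\nabla u|^2}{v^2}\ge 0$, which is just a rearrangement of your case split on the sign of $\phi''(v)v-\phi'(v)$).

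One minor technical refinement the paper makes that you should incorporate: in the reduction step the paper approximates $f$ not only by smooth non-decreasing bounded functions but additionally by functions that are \emph{constant for large~$t$}, so that $t\mapsto t f'(t)$ is bounded. This is what allows the pointwise domination $|\delta_{h,j}A(\nabla u)\,f'(|\delta_h u|)\,\nabla|\delta_h u|\cdot\delta_{h,j}u|\lesssim |\delta_h V(\nabla u)|^2$ (your dominated-convergence step for $I_3$); with only ``$f$ bounded'' the factor $v f'(v)$ need not be bounded, and the passage $h\to 0$ in $I_3$ is not obviously justified. This is a two-word fix in your approximation and does not affect the rest of your argument.
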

\begin{proof}
  The proof is similar to Theorem~\ref{thm:higherreg}, which is just a
  special case of this theorem with~$f=1$.  We already know from
  Theorem~\ref{thm:higherreg} that $\nabla V(\nabla u) \in
  L^2_{\loc}(J \times \Omega)$. 
	%This and Lemma~\ref{lem:W21} impliesthat $\nabla^2 u \in L^1_{\loc}(J \times \Omega)$. 
	In the following
  we abbreviate $v:= \abs{\nabla u}$.

  We now want to include the function $f$. By means of the monotone
  convergence theorem it suffices to prove the theorem under the
  additional assumption that $f\in C^1$ and that $f$ is constant for
  large values. 
% Moreover, we assume for technical reason in the
%   following that $\phi \in C^2([0,\infty))$, i.e. $\phi''(0)$ is well
%   defined. In the end of this proof we explain how to remove this
%   assumption. 

  We take the test function
  $\xi = \abs{Q}^{-1} \delta_{-h,j}(f(|\delta_{h}u|) \eta^q
  \delta_{h,j}u )$
  over $j=\set{1,...,n}$. With the standard treatment of the time
  derivative and
  $\partial_t H(\abs{\delta_h u}) = (\partial_t \delta_h u)
  f(\abs{\delta_h u}) \delta_h u$ one finds
  \begin{align}\label{eq:instatdiscretenergy}
    \begin{aligned}
      \text{I}+\text{II}&:=\sum_j\dashint_Q\delta_{h,j}A(\nabla
      u)\cdot\nabla \big(f(|\delta_{h}u|)\eta^q \delta_{h,j}u\big)\dz +
      \frac 1 {\alpha R^2}\sup_I\dashint_B H(|\delta_h u|) \eta^q\dx
      \\
      &\leq\dashint_Q H(|\delta_h
      u|)\,\abs{\partial_t\left(\eta^q\right)}\dz=:\text{III}.
    \end{aligned}
  \end{align}
  Since we have $\nabla u\in L^2(Q)$ we have $\delta_{h,j} u
  \to \partial_j u$ in~$L^2$. With this and $H(t)\leq
  \frac{\norm{f}_\infty}{2}t^2$, we get as $h \to 0$
  \begin{align}
    \text{II}\rightarrow \frac 1 {\alpha R^2}\sup_I\dashint_B
    H(v)\dx\label{IItimelimit},
    \\
    \label{IIItimelimit}
    \text{III}\rightarrow \dashint_Q
    H(v)\abs{\partial_t\left(\eta^q\right)}\dz.
  \end{align}
  For the term $\text{I}$, we have to be more careful. We begin by splitting 
  \begin{align}\label{all2}
    \begin{aligned}
      %& \dashint_Q A(\nabla u)\cdot\nabla (\delta_{-h,j}(f(|\delta_h u|)\,\eta^q\,\delta_{h}u))\dz =
			&\dashint_Q\delta_{h,j}A(\nabla
      u)\cdot\nabla(f(|\delta_h u|)\,\eta^q\,\delta_{h}u)\dz
      =\dashint_Q \delta_{h,j}A(\nabla u)\cdot(\nabla|\delta_h
      u|)\,f'(|\delta_h u|)\,\eta^q\, \delta_{h,j}u \dz
			\\
      &\quad+\dashint_Q{\delta_{h,j}A(\nabla u)}\cdot{\delta_{h,j}\nabla u\;f(|
        \delta_h u|)\eta^q}\dz
    +\dashint_Q{\delta_{h}A(\nabla u)}\cdot\nabla \eta
      \,q\eta^{q-1}{f(|\delta_h u|)\delta_{h}u}\dz
      =:\text{I}^1_{j}+\text{I}^2_j+\text{I}^3_j
    \end{aligned}
  \end{align}
 We get for $\text{I}^2_j$ with
  Lemma~\ref{lem:hammer}:
  \begin{align}\label{II}
    \text{I}^2_j=\dashint_Q\delta_{h} A(\nabla u)\cdot\delta_{h,j}\nabla
    u\,f(|\delta_{h,j} u|) \eta^q\dz\eqsim\dashint_Q|\delta_{h,j} V(\nabla
    u)|^2f(|\delta_{h,j} u|)\eta^q\dz
  \end{align}
  and because of $V(\nabla u)\in W^{1,2}(Q)$ we get as $h\rightarrow 0$
  \begin{align}\label{IIlimit}
    \text{I}^2_j\rightarrow \dashint_B|\partial_j V(\nabla
    u)|^2f(v)\eta^q\dz.
  \end{align}
  The estimate for $\text{I}^3_j$ is of lower order. It can be treated
  in the same way as in the stationary case,
  see~\cite[Theorem~11]{DieE08}. The additional factor of $f(v)$ does
  not change the proof as every step is of pointwise manner. In
  explicit we find
  \begin{align}\label{IIIlimit}
    \limsup_{h\rightarrow 0}\sum_j|\text{I}^3_j|\leq \delta
    \dashint_Q\left\vert\nabla V(\nabla
      u)\right\vert^2f(v)\eta^q\dz+c_\delta
    \dashint_Q \abs{V(\nabla u)}^2 \norm{\nabla \eta}_\infty^2f(v)\dz
  \end{align}
  For $\text{I}^1_j$ in~\eqref{all2}, we note that $|\delta_{h}
  u|f'(|\delta_h u|)$ is bounded uniformly in $h$ because of $f'(t)=0$
  for large~$t$. For the integrand of $\text{I}^1_j$ this gives with the
  help of Lemma \ref{lem:hammer}
  \begin{align}\label{Iestimate1}
    \begin{aligned}
      \bigabs{\delta_{h}A(\nabla u) f'(|\delta_h u|) (\nabla|\delta_h
      u|)\;\eta^q\,\delta_{h}u}
      &\leq |\delta_{h}A(\nabla u)|\, \bigabs{\nabla|\delta_h
      u|}\, \bigabs{f'(|\delta_h u|)\delta_{h}u}
      \\
      &\lesssim h^{-2}|\tau_{h}A(\nabla u)|\,|\tau_h\nabla u|
      \\
      &\lesssim |\delta_hV(\nabla u)|^2,
    \end{aligned}
  \end{align}
  where the constants depend on~$f$.  Since
  $\nabla V(\nabla u) \in L^2(Q)$, we can use the generalized theorem
  of dominated convergence and get a limit function in $L^1(Q)$.  The
  next step is to proof that this term converges to something that is
  positive, for this we have to use the positivity of the second
  variation. 

  For the next steps we need the additional assumption
  that~$\nabla^2 u \in L^s(Q)$ for some~$s>1$ and that $\phi \in C^2([0,\infty))$.
  Later we explain how to remove this additional assumption.  In this
  case, we know, that all quantities involving difference quotients
  have a well defined almost everywhere limit. We find
  \begin{align*}
    \sum_{j=1}^n\text{I}^1_j 
    &\rightarrow     \sum_{j=1}^n \dashint_Q \partial_j
      A(\nabla u) \cdot \nabla v\; f'(v) \eta^q\, \partial_j u\,dz
    \\
    &=     \sum_{j,k=1}^n \dashint_Q \partial_j \bigg(
      \frac{\phi'(v)}{v} \partial_k u \bigg) 
      \partial_k v\; f'(v) \eta^q\, \partial_j u\,dz
    \\
    &=
      \dashint_Q\left(\frac{\phi'(v)}{v}\left(|\nabla v|^2-\frac{|\nabla
      v\cdot\nabla u|^2}{v^2}\right)+\phi''(v)\frac{|\nabla
      v\cdot\nabla u|^2}{v^2}\right)f'(v)\dz
  \end{align*}
  using in the last step that $v\, \partial_j v = \sum_k \partial_k
  u\, \partial_j \partial_k u$. 

  Using the Cauchy-Schwartz inequality and the fact that $f'(t)\geq 0$
  we conclude that $\sum_{j=1}^n \text{I}^1_j$ converges to
  a non-negative function and can be omitted. 

  Combining all limits the claim of the theorem follows, however under
  the additional assumption that~$\nabla^2 u \in L^s(Q)$ for
  same~$s>1$ and
  $\phi \in C^2([0,\infty))$. 

  Let us now explain how to overcome this additional assumption by
  means of an approximation argument. We proceed similar to
  \cite{DieSV09} and~\cite{BerDieRuz10}. For this we
  approximate~$\phi$ by its shifted version~$\phi_\lambda$ for a
  small~$\lambda>0$. In the end the limit~$\lambda \searrow 0$ will
  imply the general result for~$\phi$. By $A_\lambda$ and $V_\lambda$
  we denote the modified quantities~$A$ and $V$.

  We have~$Q \compactsubset J \times \Omega$, so $J \times \Omega$
  contains an enlarged~$Q$. For the ease of presentation let us assume
  that $4Q := (t- 4\alpha R^2,t) \times 4B \subset J \times \Omega$.

  Due to~\eqref{eq:higherreg} we have $u(t-2\alpha R^2, \cdot) \in
  W^{1,2}(2B)$. 
  Now, for $\lambda>0$ small let $u_\lambda$ be the solutions of
  \begin{align}
    \label{eq:approx}
    \begin{aligned}
      \partial_t u_\lambda-\Delta_{\phi_\lambda}u_\lambda&=0\text{ in }2Q\\
      u_\lambda&=u\text{ on }\partial_{\text{par}}(2Q).
    \end{aligned} 
  \end{align} 
  Since $L^{\phi_\lambda}(B) = L^\phi(B)$, the existence of $u_\lambda \in
  L^\infty(L^2)$ with $\nabla u_\lambda \in L^{\phi_\lambda}(2Q)$ is
  standard. Moreover, by~\eqref{eq:cor-time} we know that
  $\nabla V_\lambda(\nabla u_\lambda) \in L^2(Q)$ and $\nabla
  u_\lambda \in L^\infty(I, L^2(B))$ uniformly in~$\lambda>0$. 
  
  Due to Lemma~4.3 of~\cite{DieSV09} it follows from
  $\nabla V_\lambda(\nabla u_\lambda) \in L^2(Q)$ that
  $\nabla^2 u_\lambda \in L^s(Q)$ for some~$s>1$, with $s$~independent
  of~$\lambda$. This and~$\phi_\lambda \in C^2([0,\infty))$ implies
  that our calculations above are applicable to~$u_\lambda$. In
  particular, we obtain that~\eqref{eq:instatenergy} is valid for $u$
  replaced by~$u_\lambda$ and $\phi$ replaced by~$\phi_\lambda$. It
  remains to show the passage to the limit. For this is suffices to
  show that $\nabla u_\lambda \to \nabla u$ and
  $V_\lambda(\nabla u_\lambda) \to V(\nabla u)$ almost everywhere (for
  a subsequence) and
  $\nabla V_\lambda(\nabla u_\lambda) \weakto \nabla V(\nabla u)$
  in~$L^2(Q)$ as~$\lambda \searrow 0$. This ensures the strong limit
  on the right hand side of~\eqref{eq:instatenergy} and by lower
  semicontinuity also on the left-hand side.

  From the error we obtain
  \begin{align}
    \frac 12 \norm{(u_\lambda -u)(t)}_2^2 +\int_{2Q}
    \big(A_\lambda(\nabla u_\lambda)-A(\nabla
    u)\big)\cdot\nabla(u_\lambda-u)\dz= 0.
  \end{align}
  Hence,
  \begin{align}
    \label{eq:4}
    \int_{2Q}
    \big(A_\lambda(\nabla u_\lambda)-A_\lambda(\nabla
    u)\big)\cdot\nabla(u_\lambda-u)\dz 
    &\leq
      \int_{2Q}
      \big(A(\nabla u)-A_\lambda(\nabla
      u)\big)\cdot\nabla(u_\lambda-u)\dz.
  \end{align}
  It follows from Lemma~\ref{lem:shift-stabilization} that
  \begin{align}
    \label{eq:5}
    \abs{A_\lambda(Q) - A(Q)} &\leq c\, \phi_{\abs{Q}}'(\lambda).
  \end{align}
  The same argument applied to the Orlicz function~$\psi_{\abs{Q}}$
  defined by $\psi_{\abs{Q}}'(t) := \sqrt{\phi'_{\abs{Q}}(t)\,t}$
  implies\footnote{Compare \cite{DieE08} for the use of~$\psi$.}
  \begin{align}
    \label{eq:6}
    \abs{V_\lambda(Q) - V(Q)}^2 
    &\leq c\, \abs{\psi_{\abs{Q}}'(\lambda)}^2
      \eqsim
      c\, \phi_{\abs{Q}}(\lambda)
  \end{align}
  Hence, from~\eqref{eq:4} we obtain with Lemma~\ref{lem:hammer} 
  \begin{align*}
    \bignorm{V_\lambda(\nabla
    u_\lambda) - V_\lambda(\nabla u)}_2^2 
    &\lesssim 
      \int_{2Q} \phi_{\abs{\nabla u}}'(\lambda)\, \abs{\nabla(u_\lambda-u)}\dz.
  \end{align*}
  Now Young's inequality with $\phi_{\abs{\nabla u}}$ and
  Lemma~\ref{lem:hammer} imply
  \begin{align*}
    \bignorm{V_\lambda(\nabla
    u_\lambda) - V_\lambda(\nabla u)}_2^2 
    &\lesssim 
      \int_{2Q} \phi_{\abs{\nabla u}}(\lambda)\,dz.
  \end{align*}
  The right hand side convergence pointwise to zero for~$\lambda
  \searrow 0$ and has the majorant
  $c\,(\phi(\abs{\nabla u}) + \phi(\lambda)) \in L^1(Q)$. This proves that
  \begin{align*}
    \lim_{\lambda \searrow 0}  \bignorm{V_\lambda(\nabla
    u_\lambda) - V_\lambda(\nabla u)}_2^2  = 0.
  \end{align*}
  It follows from~\eqref{eq:6} by the dominated convergence that
  $\norm{V_\lambda(\nabla u) - V(\nabla u)}_2^2 \to 0$.  So
  $V_\lambda(\nabla u) \to V(\nabla u)$ almost everywhere.  Thus it
  follows from \cite[Lemma~4.8]{DieSV09} that also
  $\nabla u_\lambda \to \nabla u$ almost everywhere. This and the
  uniform boundedness of~$\nabla V_\lambda(\nabla u_\lambda)$ in
  $L^2(Q)$ implies also $\nabla V_\lambda(\nabla u_\lambda) \weakto
  \nabla V(\nabla u)$ in~$L^2(Q)$. 

  This completes the approximation argument and the proof is finished.
\end{proof}
In the following we use the notation $(a)_+ := \max \set{a,0}$ and
$\set{v > \gamma} = \set{(t,x)\,:\, v(t,x)> \gamma}$. We
write~$\chi_A$ for the indicator function of the set~$A$.

We want to point out that Theorem~\ref{thm:instatenergy} strongly
simplify the proof in~\cite{DiB93} for $p$-caloric functions, since we
do not need to distinguish the sub-linear~$p<2$ and super-linear~$p>2$
case. In all situations we can choose the easy
function~$f(v)=\chi_{\set{v>\gamma}}$ in
Theorem~\ref{thm:instatenergy}. This gives:
\begin{corollary}
  \label{cor:energycor}
  Let $u,f,H$ be as in Theorem~\ref{thm:instatenergy}. For $\gamma>0$
  let
  $G(t):=\big(\sqrt{\phi'(t)t\,}-\sqrt{\phi'(\gamma)\gamma\,}\,\big)_+$
  and $H(t)=(t^2-\gamma^2)_+$.  Then we get
  \begin{align}\label{instatcorineq}
    \begin{aligned}
      \lefteqn{\sup_I\frac 1 {\alpha R^2} \dashint_B
        H(v)\eta^q\dx+\dashint_Q \bigabs{\nabla
          \big(G(v)\eta^{\frac q 2}\big)}^2\dz} \qquad&
      \\
      &\lesssim \dashint_Q\phi(v)\norm{\nabla\eta}_\infty^2
      \chi_\set{v>\gamma}\dz+\dashint_Q
      H(v)\eta^{q-1}\abs{\partial_t\eta}\dz.
    \end{aligned}
  \end{align}
\end{corollary}

% \section{Stationary case}
% \label{sec:stationary-case}

% For the sake of completeness let us state 
% \begin{theorem}\label{statenergy}
%   Let $\phi$ satisfy assumption \ref{ass:main1} und let $u\in
%   W^{1,\phi}(\Omega)$ be $\phi$-harmonic on~$\Omega \subset
%   \setR^n$. With $f$, $H$, $G$ defined as above we get
%   \begin{align*}
%     \dashint_Q \left|\nabla \left(G(v)\eta^{\frac q
%           2}\right)\right|^2\dz\lesssim\dashint_Q
%     \phi(v)\norm{\nabla\eta}_\infty^2\chi_\set{v>\gamma}\dz.
%   \end{align*}
% \end{theorem}
% \begin{proof}
%   We proceed exactly like in the proof of theorem
%   \ref{thm:instatenergy} and corollary \ref{instatcorineq} without the
%   terms involving time derivatives.
% \end{proof}

\section{$L^\infty$-bounds of the Gradient}
\label{sec:linfty-bounds-grad}

In this section we prove the boundedness of the gradients~$\nabla u$
by means of the DiGeorgi technique.  We assume that the assumptions of
Theorem~\ref{thm:instatenergy} are satisfied. In particular, $u$ is a
local $\phi$-caloric solution on~$J \times \Omega$ and $Q=I
\times B = (t-\alpha R^2,t) \times B_R(x) \compactsubset J \times
\Omega$.

We define the sequence of scaled cylinders with the same center 
\begin{align*}
  Q_k&=2(1+2^{-k})\,Q.
\end{align*}
Now choose $\zeta_k\in C_0^\infty\left(\mathbb{R}^{1+n}\right)$ with
the following properties:
\begin{align*}
  \chi_{Q_k}&\leq\zeta_k\leq\chi_{Q_{k+1}}
  \\
  \abs{\nabla\zeta_k}&\lesssim R^{-1}2^k
  \\
  \abs{\partial_t \zeta_k}&\lesssim (\alpha R^2)^{-1} 2^k.
\end{align*}
For~$\gamma_\infty > 0$ (to be chosen later) we define
\begin{align*}
  \gamma_k&:=\gamma_\infty\left(1-2^{-k}\right)
\end{align*}
For a function~$f$ on~$2Q=2I \times 2B$ we define the following scaled
Bochner type norms
\begin{align*}
  \norm{f}_{L^s\left(L^r\right)(k)}:=\Bignorm{\norm{f}_{L^s\left(\zeta_k^q\dx
      \right)}}_{L^r(\dt)}:= \left(\dashint_{2I}\left(\dashint_{2B}
      \abs{f}^r\zeta_k^q\dx\right)^{\frac s r}\dt\right)^{\frac 1 s}.
\end{align*}
Recall that $v := \abs{\nabla u}$.
For our DeGiorgi iteration argument we define the following important
quantities
\begin{align*}
  Y_k&:=\norm{\phi(v)\chi_\set{v>\gamma_{k}}}_{L^1\left(L^1\right)(k)}
  \\
  Z_k&:= \tfrac{1}{\alpha} \norm{v^2\chi_\set{v>\gamma_{k}}}_{L^1\left(L^1\right)(k)}
  \\
  W_k&:= Y_k+Z_k.
\end{align*}
We start with some level set estimates in terms of~$W_k$.
\begin{lemma}\label{energy2}
  Uniformly in~$k$ we have
  \begin{align}
    \tfrac 1\alpha \bignorm{v^2\chi_\set{v>\gamma_{k+1}}}_{L^\infty\left(L^1\right)(k+1)}&\lesssim
    2^{3k} W_k \label{instatabsch1}
    \\
    \bignorm{\phi(v)\chi_\set{v>\gamma_{k+1}}}_{L^1\left(L^{\frac{n}{n-2}}\right)(k+1)}&
    \lesssim 2^{3k} W_k\label{instatabsch2}.
  \end{align}
\end{lemma}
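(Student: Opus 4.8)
The strategy is to apply the Caccioppoli-type energy estimate from Corollary~\ref{cor:energycor} with the cut-off $\eta = \zeta_{k+1}$ and level $\gamma = \gamma_{k+1}$, and then to post-process the two terms on its left-hand side and the two terms on its right-hand side into the quantities $W_k$. Set $v = \abs{\nabla u}$ throughout. For \eqref{instatabsch1}, the first term on the left of \eqref{eq:instatcorineq} is $\sup_{(t_0-\alpha R^2,t_0)} \tfrac1\alpha \dashint_{B_R} H(v)\dx$ with $H(v) = (v^2-\gamma_{k+1}^2)_+$; since $(v^2 - \gamma_{k+1}^2)_+ \geq (v^2-\gamma_{k+1}^2)\chi_{\set{v>\gamma_{k+1}}} \gtrsim v^2\chi_{\set{v>\gamma_{k+1}}}$ fails pointwise near the level, one uses instead the elementary fact that on $\set{v > \gamma_{k+1}}$ with $\gamma_{k+1} \geq \tfrac12 \gamma_\infty \geq \tfrac12 \gamma_k$ and, more to the point, that the gap $\gamma_{k+1}-\gamma_k = \gamma_\infty 2^{-(k+1)}$ is of order $2^{-k}\gamma_\infty$; hence on $\set{v > \gamma_{k+1}}$ one has $v^2 - \gamma_{k+1}^2 \geq v^2 - \gamma_k^2$ is false, but $H(v)\chi_{\set{v>\gamma_{k+1}}} \leq (v^2 - \gamma_k^2)_+ \leq v^2\chi_{\set{v>\gamma_k}}$, which goes the right way: the \emph{left}-hand side of \eqref{eq:instatcorineq} controls $\tfrac1\alpha\sup_t\dashint_{B_R} H(v)\chi_{\set{v>\gamma_{k+1}}}\dx$ from above by reverse inclusion, so actually we want a \emph{lower} bound of the left-hand side by $\tfrac1\alpha \norm{v^2 \chi_{\set{v>\gamma_{k+1}}}}_{L^\infty(L^1)(k+1)}$ — this comes from $v^2\chi_{\set{v > \gamma_{k+1}}} \lesssim (v^2 - \gamma_{k+1}^2)_+ + \gamma_{k+1}^2\chi_{\set{v>\gamma_{k+1}}} \lesssim H(v) + (v^2-\gamma_{k+1}^2)_+ \cdot (\text{no})$, so the honest route is $\gamma_{k+1}^2 \chi_{\set{v>\gamma_{k+1}}} \leq \tfrac{\gamma_{k+1}^2}{\gamma_{k+1}^2-\gamma_k^2}(v^2-\gamma_k^2)\chi_{\set{v>\gamma_k}}$ and $\tfrac{\gamma_{k+1}^2}{\gamma_{k+1}^2 - \gamma_k^2} \lesssim 2^k$ (since $\gamma_{k+1}^2-\gamma_k^2 \eqsim \gamma_\infty^2 2^{-k}$ and $\gamma_{k+1}^2 \eqsim \gamma_\infty^2$), giving the factor $2^k$, even better than the claimed $2^{3k}$ for this piece.

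For the right-hand side of \eqref{eq:instatcorineq}, the first term is $R^2\dashint_{Q_{2R}}\phi(v)\norm{\nabla\zeta_{k+1}}_\infty^2\chi_{\set{v>\gamma_{k+1}}}\dz$; using $\norm{\nabla\zeta_{k+1}}_\infty \lesssim R^{-1}2^{k+1}$ the $R^2$ cancels and one is left with $\lesssim 2^{2k}\dashint_{Q_{2R}}\phi(v)\chi_{\set{v>\gamma_{k+1}}}\dz$, and since $\zeta_{k+1}$ is supported in $Q_{k+2} \subset Q_{2R}$ and $\chi_{\set{v>\gamma_{k+1}}} \leq \chi_{\set{v>\gamma_k}}$, this is $\lesssim 2^{2k}\dashint_{Q_{2R}}\phi(v)\chi_{\set{v>\gamma_k}}\zeta_k^q\dz = 2^{2k} Y_k$ after noting that the scaled norm $\norm{\cdot}_{L^1(L^1)(k)}$ is comparable to $\dashint_{Q_{2R}}(\cdot)\zeta_k^q$ up to the fixed ratio $\abs{Q_{2R}}/\abs{Q_R} \eqsim 1$ and that $\chi_{\set{v>\gamma_{k+1}}} \leq \zeta_k^q \cdot(\text{no})$ — more carefully, one bounds $\chi_{\set{v>\gamma_{k+1}}}$ (which has no cut-off) by $\chi_{\set{v>\gamma_k}}$ and inserts $1 = \zeta_k^q$ on the smaller cylinder $Q_k$ where $\zeta_k\equiv 1$, absorbing the region $Q_{k+2}\setminus Q_k$ harmlessly since there we still have the bound by $\phi(v)\chi_{\set{v>\gamma_k}}$ integrated over $Q_{2R}$, which is $\geq Y_k$ trivially; thus this term is $\lesssim 2^{2k}W_k$. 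The second right-hand term is $R^2\dashint_{Q_{2R}} H(v)\zeta_{k+1}^{q-1}\abs{\partial_t\zeta_{k+1}}\dz \lesssim R^2 \cdot (\alpha R^2)^{-1}2^k \dashint_{Q_{2R}} H(v)\chi_{\set{v>\gamma_{k+1}}}\dz \lesssim 2^k \cdot \tfrac1\alpha\dashint_{Q_{2R}}(v^2-\gamma_k^2)\chi_{\set{v>\gamma_k}}\dz \leq 2^k \cdot \tfrac1\alpha\norm{v^2\chi_{\set{v>\gamma_k}}}_{L^1(L^1)(k)} \cdot c = 2^k Z_k \lesssim 2^k W_k$, where again $H(v)\chi_{\set{v>\gamma_{k+1}}} \leq (v^2-\gamma_k^2)_+$. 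Collecting, the right-hand side of \eqref{eq:instatcorineq} is $\lesssim 2^{2k}W_k$, and combined with the $2^k$ from the lower bound on the left-hand side, we get \eqref{instatabsch1} with $2^{3k}$ (indeed $2^{3k}$ is a safe over-estimate).

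For \eqref{instatabsch2} the idea is to interpolate the second term on the left of \eqref{eq:instatcorineq}, namely $R^2\dashint_{Q_R}\abs{\nabla(G(v)\zeta_{k+1}^{q/2})}^2\dz$ with $G(t) = (\sqrt{\phi'(t)t} - \sqrt{\phi'(\gamma_{k+1})\gamma_{k+1}})_+$, via the Gagliardo--Nirenberg--Sobolev inequality: writing $g := G(v)\zeta_{k+1}^{q/2}$, one has $\norm{g}_{L^{2n/(n-2)}(B_{2R})}^2 \lesssim R^2\dashint_{B_{2R}}\abs{\nabla g}^2\dx$ for each fixed time slice (after suitably normalizing — note $g$ has compact support in $B_{2R}$, so no mean-value term is needed), and then integrating in time and using the $L^\infty(L^1)$ bound just proved on $\tfrac1\alpha\int_{B_R} H(v)$ together with $H(v) = (v^2-\gamma_{k+1}^2)_+ \eqsim G(v)^2 \cdot(\text{comparison})$. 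Concretely, $G(v)^2 = (\sqrt{\phi'(v)v}-\sqrt{\phi'(\gamma_{k+1})\gamma_{k+1}})_+^2$ and by Assumption~\ref{ass:main1} together with Lemma~\ref{lem:hammer} (applied with $P = \nabla u$, $\abs{Q} = \gamma_{k+1}$) one has $G(v)^2 \eqsim \abs{V(\nabla u)} - V(\text{level})$-type quantity $\eqsim \phi_{\gamma_{k+1}}(\ldots)$, and in particular on $\set{v > 2\gamma_{k+1}}$ one has $G(v)^2 \eqsim \phi(v)$ while everywhere $G(v)^2 \lesssim \phi(v)\chi_{\set{v>\gamma_{k+1}}} \leq \phi(v)\chi_{\set{v>\gamma_k}}$, so the time-slice $L^1$ norm of $G(v)^2\zeta_{k+1}^q$ is controlled by $\tfrac1\alpha \alpha\cdot$(the $L^\infty(L^1)$ bound), while $H(v) \gtrsim G(v)^2$ as well on the relevant range; a standard parabolic interpolation $\norm{g}_{L^2(I, L^{2n/(n-2)})}^{2} \lesssim \norm{g}_{L^\infty(I,L^2)}^{(\text{some power})}\cdot(\ldots)$ — actually here one just uses directly: integrate the slicewise Sobolev bound in $t$, bound the time-integral of $R^2\dashint_{B_{2R}}\abs{\nabla g}^2\dx$ by the left-hand side of \eqref{eq:instatcorineq} hence by $2^{3k}W_k$, obtaining $\dashint_{I_{\alpha 4R^2}}\big(\dashint_{B_{2R}}\abs{g}^{2n/(n-2)}\dx\big)^{(n-2)/n}\dt \lesssim 2^{3k}W_k$, and finally translate $\abs{g}^{2n/(n-2)} = (G(v)^2)^{n/(n-2)}\zeta_{k+1}^{qn/(n-2)} \gtrsim \phi(v)^{n/(n-2)}\zeta_{k+1}^q\chi_{\set{v>2\gamma_{k+1}}}$ while the contribution from $\gamma_{k+1} < v \leq 2\gamma_{k+1}$ is absorbed into $W_k$ using $\phi(v)^{n/(n-2)}\chi_{\set{\gamma_{k+1}<v\leq 2\gamma_{k+1}}} \lesssim \phi(\gamma_{k+1})^{2/(n-2)}\phi(v)\chi_{\set{v>\gamma_k}}$ with the constant $\phi(\gamma_{k+1})^{2/(n-2)}$ being $k$-independent after a harmless normalization of $\gamma_\infty$ (or, in the final iteration scheme, carried along). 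This gives \eqref{instatabsch2}.

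The main obstacle is bookkeeping the level-gap factors correctly: one must repeatedly replace $\chi_{\set{v>\gamma_{k+1}}}$-weighted quantities like $\gamma_{k+1}^2\chi_{\set{v>\gamma_{k+1}}}$ or $\phi(\gamma_{k+1})\chi_{\set{v>\gamma_{k+1}}}$ by genuine super-level excesses $(v^2-\gamma_k^2)_+$ or $(\phi(v)-\phi(\gamma_k))_+ \leq \phi(v)\chi_{\set{v>\gamma_k}}$, and each such replacement costs a power of $2^k$ coming from $1/(\gamma_{k+1}-\gamma_k) \eqsim 2^k/\gamma_\infty$; keeping the total at $2^{3k}$ (rather than something worse) requires being a little careful about which term absorbs which factor, but no step is deep — the heavy analytic content (the Caccioppoli estimate and the existence of $\nabla V(\nabla u) \in L^2$) has already been established in Corollary~\ref{cor:energycor} and Theorem~\ref{thm:higherreg}. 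The only genuinely non-trivial inequality used here is the time-slicewise Sobolev embedding $W^{1,2}_0(B_{2R}) \hookrightarrow L^{2n/(n-2)}(B_{2R})$ with the correct $R$-scaling, which is classical.
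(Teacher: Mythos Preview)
Your outline captures the right high-level strategy --- apply the Caccioppoli inequality of Corollary~\ref{cor:energycor}, bound its right-hand side by $2^{2k}W_k$ using the cut-off scaling, and then invoke the slicewise Sobolev embedding for the $L^{n/(n-2)}$ estimate --- but two of your concrete choices differ from the paper's and, as written, do not close.

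\textbf{Level.} The paper applies Corollary~\ref{cor:energycor} at level~$\gamma_k$, not~$\gamma_{k+1}$. This puts $H_k(v)=(v^2-\gamma_k^2)_+$ and $G_k(v)=\bigl(\sqrt{\phi'(v)v}-\sqrt{\phi'(\gamma_k)\gamma_k}\,\bigr)_+$ on the left, and then the single gap estimate
\[
  h(v)\,\chi_{\set{v>\gamma_{k+1}}}\;\lesssim\; 2^{k}\bigl(h(v)-h(\gamma_k)\bigr)_+
\]
(valid for both $h(t)=t^2$ and $h(t)=\sqrt{\phi'(t)t}$ because $h(2t)\lesssim h(t)$ and $t\,h'(t)\eqsim h(t)$) converts the full quantities $v^2\chi_{\set{v>\gamma_{k+1}}}$ and $\phi(v)^{1/2}\chi_{\set{v>\gamma_{k+1}}}$ into $H_k$ and $G_k$ in one step, uniformly on $\set{v>\gamma_{k+1}}$. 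Your choice of level~$\gamma_{k+1}$ forces a separate treatment of the layer $\gamma_{k+1}<v\le 2\gamma_{k+1}$, and your proposed absorption there carries the factor $\phi(\gamma_{k+1})^{2/(n-2)}\eqsim\phi(\gamma_\infty)^{2/(n-2)}$. That factor is \emph{not} bounded independently of~$\gamma_\infty$, whereas the implicit constant in the lemma must be; saying it can be ``carried along'' into the iteration does not prove the lemma as stated.

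\textbf{Cut-off exponent.} More seriously, the paper takes $\eta=\zeta_{k+1}^{(n-2)/n}$ rather than $\eta=\zeta_{k+1}$. After Sobolev this produces
\[
  \bigl(G_k(v)\,\eta^{q/2}\bigr)^{2n/(n-2)} \;=\; G_k(v)^{2n/(n-2)}\,\zeta_{k+1}^{\,q},
\]
so the power of~$\zeta_{k+1}$ matches exactly the weight defining the $(k+1)$-norm. With your choice $\eta=\zeta_{k+1}$ you instead obtain $\zeta_{k+1}^{\,qn/(n-2)}$, and your asserted pointwise bound $\zeta_{k+1}^{\,qn/(n-2)}\gtrsim\zeta_{k+1}^{\,q}$ is false wherever $0<\zeta_{k+1}<1$, since $n/(n-2)>1$. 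The comparison you need therefore goes the wrong way on the transition annulus, and~\eqref{instatabsch2} does not follow.

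Both defects are reparable precisely by the paper's choices (level~$\gamma_k$ together with the unified gap estimate, and the cut-off $\zeta_{k+1}^{(n-2)/n}$); your estimates for the right-hand side of~\eqref{eq:instatcorineq} and your use of Sobolev are otherwise in line with the paper's argument.
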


\begin{proof}
  We define
  ${G_k(t):=\left(\left(\phi'(t)t\right)^\frac12-\left(\phi'(\gamma_k)\gamma_k\right)^\frac12\right)_+}$ and ${H(t)=(v^2-\gamma_k^2)_+}$ as above and
  recall the energy inequality from Corollary \ref{cor:energycor} with $\eta=\left(\zeta^{\frac{n-2}{n}}_{k+1}\right)$:
  \begin{align}\label{energyzeta}
    \begin{aligned}
      &\sup_I\frac 1 \alpha \dashint_B
      H_k(v)\zeta_{k+1}^{q\frac{n}{n-2}}\dx+
      R^2\dashint_Q\left\vert\nabla\left(G_k(v)\zeta_{k+1}^{\frac q 2
            \frac{n}{n-2}}\right)\right\vert^2\dz
      \\
      &\quad\lesssim R^2\dashint_Q
      \phi(v)\Bignorm{\nabla\left(\zeta_{k+1}^{\frac{n}{n-2}}
        \right)}_\infty^2\chi_\set{v>\gamma_{k+1}}\dz+R^2\dashint_Q
      H(v)\zeta_{k+1}^{(q-1)\frac{n}{n-2}}     \partial_t\left(\zeta^{\frac{n}{n-2}}\right)\dz
    \end{aligned}
  \end{align}
  
  At first we estimate the terms on the right hand side of \ref{energyzeta} and note that $\zeta_k\equiv 1$ on $\supp \zeta_{k+1}$:
  
  \begin{align*}
    R^2\dashint_Q \phi(v) \chi_\set{v>\gamma_k}\Bignorm{\nabla
      \left(\zeta_{k+1}^{\frac{n}{n-2}}\right)}_\infty^2\dz&\lesssim2^{2k}\dashint_Q
    \phi(v) \chi_\set{v>\gamma_k}\chi_{\supp\chi_\set{k+1}}\dz
    \\
    &\leq2^{2k}\dashint_Q \phi(v) \chi_\set{v>\gamma_k}\zeta_k^q\dz
    \\
    &=2^{2k}Y_k
  \end{align*}
  and
  \begin{align*}
    R^2\dashint_Q H_k(v)\left(\zeta^{\frac{n-2}{n}}_{k+1}\right)^{q-1}
    \left\vert\partial_t\left(\zeta^{\frac{n-2}{n}}_{k+1}\right)
    \right\vert\dz&\lesssim\frac{2^{k+1}R^2}{\alpha R^2}\dashint_Q v^2
    \chi_\set{v>\gamma_k}\chi_{\supp\chi_\set{k+1}}\dz
    \\
    &\lesssim\frac{2^{k}}{\alpha}\dashint_Q v^2
    \chi_\set{v>\gamma_k}\zeta_k^q\dz
    \\
    &=2^{k} Z_k\leq 2^{2k} Z_k
  \end{align*}	
  Putting this in \ref{energyzeta} gives
  \begin{equation}\label{instatenergyright}
    \sup_I\frac 1 \alpha \dashint_B H_k(v)\zeta_{k+1}^{q\frac{n}{n-2}}\dx+R^2\dashint_Q\left\vert\nabla\left(G_k(v)\zeta_{k+1}^{\frac q 2 \frac{n}{n-2}}\right)\right\vert^2\dz\lesssim 2^{2k}W_k
  \end{equation}
  To prove \ref{instatabsch1} we first note that for $h(t)=t^2$ or $h(t)=\left(\phi'(t)t\right)^{\frac 1 2}$ we get:
  \begin{align*}
    h(v)&=h(v)-h(\gamma_{k})+h(\gamma_{k})
    \\
    &=h(v)-h(\gamma_{k})+\frac{h(\gamma_{k})}{h(\gamma_{k+1})-
      h(\gamma_{k})}\left(h(\gamma_{k+1})-h(\gamma_{k})\right)
    \\
    &\leq(h(v)-h(\gamma_{k}))\frac{h(\gamma_{k+1})}{h(\gamma_{k+1})-h(\gamma_{k})}
    \\
    &\leq\frac{h(\gamma_{k+1})}{h(\gamma_{k+1})-h(\gamma_{k})}(h(v)-h(\gamma_{k}))_+
  \end{align*} 
  and for $k\geq 1$ we get using the intermediate value theorem of
  differential calculus with some $t\in(\gamma_{k},\gamma_{k+1})$ and
  the fact that $h(2t)\lesssim h(t)$ 
  and $th'(t)\eqsim h(t)$:
  \begin{align*}
    \frac{h(\gamma_{k+1})}{h(\gamma_{k+1})-h(\gamma_{k})}&=
    \frac{h(\gamma_{k+1})}{h'(t)\left(\gamma_{k+1}-\gamma_k\right)}
    \eqsim
    \frac{h(\gamma_{k+1})t}{h(t)\left(c\left(2^{-k}-2^{-k-1}\right)\right)}
    \lesssim \frac{h(\gamma_{k+1})}{h\left(\frac {\gamma_{k+1}}
        2\right)}2^{k+1}
    \lesssim 2^{k+1}
  \end{align*}
  So in total we have
  \begin{equation}\label{dif}
    h(v)\chi_\set{v>\gamma_{k+1}}\lesssim 2^{k+1}\left(h(v)-h(\gamma_k)\right)_+
  \end{equation}
  and we see that $\zeta\leq\zeta^{\frac {n-2} n}$ as $0\leq\zeta\leq
  1$. Putting this in \ref{instatenergyright} gives
  \begin{align*}
    \norm{v^2\chi_\set{v>\gamma_{k+1}}}_{L^\infty\left(L^1\right)(k+1)}&=
    \alpha\sup_{I}\frac{1}{\alpha}\dashint_B
    v^2\chi_\set{v>\gamma_{k+1}}\zeta_{k+1}^q\dx
    \\
    &\lesssim \alpha2^k\sup_{I}\frac{1}{\alpha}\dashint_B
    H_k(v)\left(\zeta^{\frac{n-2}{n}}_{k+1}\right)^q\dx
    \\
    &\lesssim \alpha2^{3k}W_k
  \end{align*}
  
  For inequality \ref{instatabsch2} we set $h(t)=\left(\phi'(t)t\right)^{\frac{1}{2}}$ in \ref{dif} and get $\phi(t)^{\frac 1 2}\chi_{t>\gamma_{k+1}}\sim\left(\phi'(t)t\right)^{\frac{1}{2}}\chi_{t>\gamma_{k+1}}\lesssim 2^k G_k(t)$ for $t>\gamma_{k+1}$. We use Sobolev's embedding inequality and the previous estimates to find
  \begin{align*}
    \bignorm{\phi(v)\chi_\set{v>\gamma_{k+1}}}_{L^1\left(L^{\frac{n}{n-2}}\right)(k+1)}&=
    \biggnorm{\Bignorm{\phi(v)\chi_\set{v>\gamma_{k+1}} \zeta_{k+1}^{q\frac{n-2}{n}}}_{L^{\frac{n}{n-2}}(\dx)}}_{L^1(\dt)}\\
    &=\biggnorm{\Bignorm{\phi(v)^{\frac 1 2}\chi_\set{v>\gamma_{k+1}}\zeta_{k+1}^{\frac q 2\frac{n-2}{n}}}_{L^{\frac{2n}{n-2}}(\dx)}^2}_{L^1(\dt)}\\
    &\lesssim 2^k\biggnorm{\Bignorm{G_k(v)\zeta_{k+1}^{\frac q 2 \frac{n-2}{n}}}_{L^{\frac{2n}{n-2}}(\dx)}^2}_{L^1(\dt)}\\
    &\lesssim 2^kR^2\biggnorm{\Bignorm{\nabla\left(G_k(v)\zeta^{\frac q 2\frac{n-2}{n}}\right)}_{L^2(\dx)}^2}_{L^1(\dt)}\\
    &=2^k R^2\dashint \left\vert\nabla\left(G_k(v)\zeta^{\frac q 2\frac{n-2}{n}}\right)\right\vert^2\dz\\
    &\lesssim 2^{3k}W_k
  \end{align*}
  This concludes the proof of the lemma.
\end{proof}

We are now able to proof the main Theorem~\ref{thm:main}.

\begin{proof}[Proof of Theorem~\ref{thm:main} ]
  \label{lem:proofmain}
  We use the definitions from Lemma \ref{energy2}. For $Y_{k+1}$ we get, by
  H\"older's inequality, for the couple
  $(\frac{n}{n-2},\frac{n}{2})$ (for $n\geq 3$)
  \begin{align*}
    Y_{k+1}&=\Bignorm{\phi(v)\chi_\set{v>\gamma_{k+1}}}_{
      L^1\left(L^1\right)(k+1)}=\biggnorm{\frac{v^{\frac 4 n}}{v^\frac 4
        n}\phi(v)\chi_\set{v>\gamma_{k+1}}}_{
      L^1\left(L^1\right)(k+1)}
    \\
    &\leq\frac{1}{\gamma_{k+1}^{\frac 4 n}}\bignorm{\phi(v){v^{\frac 4
          n}}\chi_\set{v>\gamma_{k+1}}}_{L^1\left(L^1\right)(k+1)}
    \\
    &\lesssim\frac 1 {\gamma_{\infty}^{\frac 4
        n}}\bignorm{\phi(v)\chi_\set{v>\gamma_{k+1}}}_{ L^1\left(L^{\frac
          n {n-2}}\right)(k+1)}\bignorm{v^{\frac 4
        n}\chi_\set{v>\gamma_{k+1}}}_{L^\infty\left(L^{\frac n
          2}\right)(k+1)}
    \\
    &=\frac 1 {\gamma_{\infty}^{\frac 4
        n}}\bignorm{\phi(v)\chi_\set{v>\gamma_{k+1}}}_{ L^1\left(L^{\frac
          n {n-2}}\right)(k+1)}\bignorm{v^2\chi_\set{v>\gamma_{k+1}}}_{
      L^\infty\left(L^{1}\right)(k+1)}^{\frac 2 n}
    \\
    &\lesssim2^{3k\left(1+\frac{2}{n}\right)}
    W_k\left(\frac{W_k\alpha}{\gamma_\infty^2}\right)^{\frac 2 n}.
  \end{align*}
  (Note that for $n=1,2$ we can use any couple $(q, q')$.)  And now
  for $Z_{k+1}$, we use the function $\rho(t):=\phi(t)t^{\frac{4}{n}-2}$ and
  estimate
  \begin{align*}
    \alpha\, Z_{k+1}&=\bignorm{v^2\chi_\set{v>\gamma_{k+1}}}_{L^1\left(L^1\right)(k+1)}=
    \biggnorm{\frac{\rho(v)^{\frac 2 n}}{\rho(v)^{\frac 2
          n}}v^2\chi_\set{v>\gamma_{k+1}}}_{L^1\left(L^1\right)(k+1)}
    \\
    &\leq\frac{1}{\rho(\gamma_{k+1})^{\frac 2
        n}}\bignorm{\phi(v){v^{\frac 4
          n}}\chi_\set{v>\gamma_{k+1}}}_{L^1\left(L^1\right)(k+1)}
    \\
    &\lesssim\frac 1 {\rho(\gamma_{\infty})^{\frac 2
        n}}\bignorm{\phi(v)\chi_\set{v>\gamma_{k+1}}}_{L^1\left(L^{\frac
          n {n-2}}\right)(k+1)}\bignorm{v^{\frac 4
        n}\chi_\set{v>\gamma_{k+1}}}_{L^\infty\left(L^{\frac 2
          n}\right)(k+1)}
    \\
    &=\frac 1 {\rho(\gamma_{\infty})^{\frac 2
        n}}\bignorm{\phi(v)\chi_\set{v>\gamma_{k+1}}}_{ L^1\left(L^{\frac
          n {n-2}}\right)(k+1)}\bignorm{v^2\chi_\set{v>\gamma_{k+1}}}_{
      L^\infty\left(L^{1}\right)(k+1)}^{\frac 2 n}
    \\
    &\lesssim2^{3k\left(1+\frac{2}{n}\right)}
    W_k\left(\frac{W_k\alpha}{\rho(\gamma_\infty)}\right)^{\frac 2 n}.
  \end{align*}
  In total, we have
  \begin{align*}
    W_{k+1}&=Y_{k+1}+ Z_{k+1}
    \\
    &\lesssim 2^{3k\left(1+\frac{2}{n}\right)}W_k\left(\frac{W_k\alpha}{\gamma_\infty^2}\right)^{\frac 2 n}+ 2^{3k\left(1+\frac{2}{n}\right)}\frac{W_k}{\alpha}\left(\frac{W_k\alpha}{\rho(\gamma_\infty)}\right)^{\frac 2 n}\\
    &\lesssim 2^{3k\left(1+\frac{2}{n}\right)}W_k \max\left\{\left(\frac{W_k\alpha}{\gamma_\infty^2}\right)^{\frac 2 n},\left(\frac{W_k\alpha^{\frac{2-n}{n}}}{\rho(\gamma_\infty)}\right)^{\frac 2 n}\right\}\\
    &=2^{3k\left(1+\frac{2}{n}\right)}W_k\left(\frac{W_k}{\min\left\{\frac{\rho\left(\gamma_\infty\right)}{\alpha^{\frac{2-n}{2}}},\frac{\gamma_\infty^2}{\alpha}\right\}}\right)^{\frac{2}{n}}
  \end{align*}
  and the theorem follows from Lemma 4.1 in \cite{DiB93} as we have $W_k\rightarrow 0$ if we choose $\gamma_\infty$ such that $ W_0\eqsim \min\left\{\frac{\rho\left(\gamma_\infty\right)}{\alpha^{\frac{2-n}{2}}},\frac{\gamma_\infty^2}{\alpha}\right\}$. This implies
  $$
  \min\left\{\frac{\rho(v)}{\alpha^{\frac{2-n}{2}}},\frac{v^2}{\alpha}\right\}\leq\min\left\{\frac{\rho\left(\gamma_\infty\right)}{\alpha^{\frac{2-n}{2}}},\frac{\gamma_\infty^2}{\alpha}\right\}\eqsim
  W_0=\dashint_Q \phi(v)+\frac{v^2}{\alpha}\dz.$$
  This proves our main theorem.
\end{proof}

\clearpage
\enlargethispage{2cm}

%\bibliographystyle{abbrv} %{amsalpha}

% \bibliographystyle{amsalpha}
% \bibliography{lars}

\end{document}